\documentclass[12pt]{article} \usepackage{fullpage}
\usepackage{amssymb,amsmath,amsthm}
\usepackage{thmtools,thm-restate}
%\usepackage{lineno}
%\linenumbers

\usepackage{nicefrac}
\usepackage{url}
\usepackage{color}
\usepackage[american]{babel}
\usepackage{graphicx}
\usepackage{version}
\usepackage{subfigure} % change to subcaption if you'd rather be fancy
\usepackage[textsize=footnotesize]{todonotes}
\usepackage{xspace}

% version control -- switch lines for conference version
\newif\iffull
\fulltrue
%\fullfalse

% tell me where the figures are
\graphicspath{{figures/}}

% math stuff
\newtheorem{theorem}{Theorem}
\newtheorem{corollary}{Corollary}

\newtheorem{observation}{Observation}

\newtheorem{lemma}{Lemma}

%%%%%% some things that TB likes using

\newcommand{\full}{>}
\usetikzlibrary{decorations.pathmorphing}
\DeclareMathOperator{\overshadows}{\raisebox{-0ex}{\!\begin{tikzpicture}[scale=0.13]
\useasboundingbox (-1,-0.8) rectangle (0.2,1.3);
\draw [solid,sharp corners] (-1,-1) -- (0,-0.3) -- (0,1.5);
\draw [solid,sharp corners] (0.3,-1) -- (0.3,-0.3) -- (0.3,0.7);
\end{tikzpicture}\!
}
}
\DeclareMathOperator{\notovershadows}{\raisebox{-0ex}{\!\begin{tikzpicture}[scale=0.13]
\useasboundingbox (-1,-0.8) rectangle (0.2,1.3);
\draw [solid,sharp corners] (-1,-1) -- (0,-0.3) -- (0,1.5);
\draw [solid,sharp corners] (0.3,-1) -- (0.3,-0.3) -- (0.3,0.7);
\draw [line width=0.6pt,solid] (-0.7,-1.3) -- (0.7,1.5);
\end{tikzpicture}\!\!
}
}

% comment/uncomment the following for final version
%\renewcommand{\leaveout}[1]{#1}
\newcommand{\leaveout}[1]{}

\date{}
\title{Strongly chordal graphs as intersection graphs of trees \\ (Farber's proof revisited)}
\author{Therese Biedl
\thanks{David R.~Cheriton School of Computer
Science, University of Waterloo, Waterloo, Ontario N2L 3G1, Canada.
Supported by NSERC.  
}
}

\begin{document}

\maketitle
\begin{abstract}
In his Ph.D. thesis, Farber proved that every strongly chordal
graph can be represented as intersection graph of subtrees of
a weighted tree, and these subtrees are ``compatible''.   Moreover,
this is an equivalent characterization of strongly chordal graphs.
To my knowledge, Farber never published his results in a conference
or a journal, and the thesis is not available electronically.   As
a service to the community, I therefore reproduce the proof here.

I then answer some questions that naturally arise from the proof.
In particular, the sufficiency proof works by showing the existence
of a simple vertex.   I give here an alternate sufficiency proof 
that directly converts a set of compatible subtrees into a strong elimination 
order.  
%I also show that permitting
%weights for the tree is required, by proving that some strongly chordal
%graphs to do not have a representation via compatible subtrees of an
%unweighted tree.
\end{abstract}

%\linenumbers

%%%%%%%%%%%%%%%%%%%%%%%%%%%%%%%%%%%%%%%%%%%%%%%%%%%%%%%%%%%%%%%%%%%%%%%%
\section{Introduction}

For many graph classes that are closed under taking induced subgraphs, 
there are multiple equivalent ways to characterize the graphs, often in one
of the following forms:
\begin{itemize}
\item Forbidden induced subgraphs.   
\item Existence of a vertex with special properties.
\item Existence of a vertex order with special properties.
\item Representation as intersection graph of some restricted kind of objects.
\end{itemize}
We illustrate this on the example of \emph{chordal graphs}, which
are the graphs where every cycle $C$ of length 4 or more has
a \emph{chord}, i.e., an edge connecting two non-consecutive vertices of $C$.
The forbidden induced subgraphs are hence the cycles of length 4 or more.
One can argue that any chordal graph $G$ (and therefore also every induced
subgraph of $G$) has a \emph{simplicial vertex}, i.e., a vertex $v$ for
which the closed neighbourhood forms a clique.     This in turn immediately
implies that $G$ has a \emph{perfect elimination order} $v_1,\dots,v_n$,
i.e., a vertex order where for $i=1,\dots,n$ vertex $v_i$ is simplicial
in the graph induced by $v_i,\dots,v_n$.       This in turn easily implies
that $G$ is chordal, so all these conditions are equivalent.   Finally,
chordal graphs have a representation as intersection graphs:   Gavril
\cite{Gavril74} showed that a graph
is chordal if and only if it is the ``intersection graph of subtrees of
a tree'' (all terms in quotation marks will be defined formally below).

This note concerns \emph{strongly chordal graphs}, a
subclass of chordal graphs.   Namely, a graph $G$ is strongly chordal if and
only if it is chordal, and every cycle $C$ that has even length at least 6
has a chord between two vertices that have odd distance within $C$ (where distance is 
measured by the number of edges on the path).   Strongly chordal graphs
are of interest because for many problems there are algorithms for strongly
chordal graphs that are significantly more efficient than the fastest
algorithms known for chordal graphs.  See for
example \cite{DahlhausK98,Farber84} for linear-time algorithms for matching and dominating set
in strongly chordal graphs.

Many equivalent definitions are known for strongly chordal graphs
(and have been exploited for designing these algorithms).    Farber
\cite{Farber83} gave a characterization via the existence of a 
``simple vertex'', or the existence of a ``strong elimination order'',
or the absence of a so-called \emph{trampoline} (or \emph{sun}) as induced subgraph.    
Even more characterizations have been developed later \cite{CM14,McKee99}.
But is there a characterization as intersection graphs?

In his Ph.D.~thesis~\cite{FarberThesis}, Farber indeed gave such
a characterization:  strongly chordal graphs are exactly those graphs
that are intersection graphs of a set of subtrees of weighted trees that 
are ``compatible'' in the sense that for any two subtrees, one ``is full''
(or ``overshadows'') the other.   Unfortunately, this characterization
was not included in the papers that he later published \cite{Farber83,Farber84}.
Furthermore, the thesis does not appear to be available in either electronic
or printed form, and neither is the technical report from Simon Fraser University
that (apparently) covers the material.   
%I have not even been able to obtain 
%a printed copy of either thesis or technical report from the university 
%inter-library system.   
Luckily, I was able to get a microfilm version
of the thesis via the {\tt omni} inter-library loan system, though
even this one seems to have been removed now.
For this reason, I decided that as a service to the community it would be helpful to create
an electronic version of Farber's proof.    
So the first part of this note is a proof of the following:

\begin{theorem}[\cite{FarberThesis}]
\label{thm:main}
A graph is strongly chordal if and only if it is the intersection
graph of a compatible collection of subtrees of a rooted weighted tree.
\end{theorem}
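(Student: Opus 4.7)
I would prove the two directions separately, as is standard for such intersection characterizations.

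\textbf{Necessity ($\Rightarrow$).} Suppose $G$ is strongly chordal. The plan is to induct on $|V(G)|$, leveraging Farber's equivalent characterization that every strongly chordal graph contains a \emph{simple vertex} $v$, i.e., a vertex whose neighbors in $N[v]$ can be listed $u_1, \dots, u_k$ so that $N[u_1] \cap N[v] \subseteq N[u_2] \cap N[v] \subseteq \cdots \subseteq N[u_k] \cap N[v]$. The graph $G-v$ is again strongly chordal, so by induction it has a rooted weighted tree $T'$ with a compatible family of subtrees $\{T_u : u \in V(G)-v\}$. I would extend this representation by hanging a small new branch $B$ off $T'$, attached at (or near) a point that lies in every $T_{u_i}$, and then defining $T_v$ to live on $B$. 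The linear order on the $N[u_i] \cap N[v]$ provided by simplicity tells me exactly how to place $T_v$ so that the intersection pattern with the $T_{u_i}$ is correct and so that each $T_{u_i}$ overshadows $T_v$ (the ``deepest'' subtree on $B$). The non-neighbors of $v$ do not meet $B$, so their subtrees trivially remain compatible with $T_v$.

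\textbf{Sufficiency ($\Leftarrow$).} Suppose $G$ is the intersection graph of a compatible family of subtrees of a rooted weighted tree $T$. Chordality follows immediately from Gavril's classical theorem. To upgrade to strongly chordal, I would produce a simple vertex in every induced subgraph (since the induced subgraph inherits the same representation, it suffices to do this for $G$ itself). The candidate is a vertex $v$ whose subtree $T_v$ is \emph{minimal} in the overshadowing preorder, for instance the one reaching deepest (in the weighted sense) in $T$. Compatibility then forces every $T_u$ that meets $T_v$ to overshadow $T_v$, and the overshadowing relation, being essentially a nesting along a root-leaf path, totally orders $\{T_u : u \in N[v]\}$. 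Translating this nesting into an inclusion chain on the sets $N[u] \cap N[v]$ witnesses that $v$ is simple.

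\textbf{Main obstacle.} The delicate point is the inductive extension in the necessity direction: I must place the new subtree $T_v$ so that (i)~it intersects exactly the subtrees of neighbors of $v$, (ii)~each of those intersections obeys the overshadowing condition in the order dictated by simplicity, and (iii)~no previously established compatibility between two subtrees of $G-v$ is broken by the augmentation of $T'$. Getting (i)--(iii) simultaneously requires choosing the attachment point of $B$ carefully---inside $\bigcap_{i} T_{u_i}$, which is nonempty because the $T_{u_i}$ are pairwise intersecting subtrees of a tree---and letting the edge weights on $B$ be strictly smaller than any weight appearing in $T'$ so that $T_v$ is overshadowed by the subtrees extending onto $B$ in the prescribed order. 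The fact that such an attachment point and such weights can always be arranged is where the bulk of the bookkeeping lives, and where I expect Farber's argument to do its real work.
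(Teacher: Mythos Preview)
Your sufficiency direction is essentially the paper's argument, but the criterion you name is not the right one. The paper picks $v$ to maximize the depth of the \emph{root} $\mathbf{v}$ of $T_v$ (not the depth reached by $T_v$, and not a ``minimal element of $\overshadows$'', which is not even well-defined---the paper exhibits compatible pairwise-intersecting subtrees with $T_1\overshadows T_2\overshadows T_3\overshadows T_1$). The point of maximizing $d(\mathbf{v})$ is that it forces $\mathbf{v}\in T_u$ for every $u\in N[v]$; one then argues, for any $u_1,u_2\in N[v]$ with (say) $T(u_2)\overshadows T(u_1)$, that $N[u_1]\subseteq N[u_2]$. It is neither claimed nor needed that every $T_u$ overshadows $T_v$. (Also, the definition of simple requires $N[u_1]\subseteq\cdots\subseteq N[u_k]$, not merely the intersections with $N[v]$.)

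Your necessity direction is a genuinely different route. The paper does \emph{not} induct on simple vertices: it starts from a strong elimination order $v_1,\dots,v_n$ and builds the host tree in one shot, with one node $\mathbf{v_j}$ per vertex, $\mathbf{v_j}$ a child of $\mathbf{v_k}$ where $v_k$ is the first later neighbour of $v_j$, and arc-weight $k-j$ so that $d(\mathbf{v_j})=n-j$. The subtree $T_\ell$ is then $\{\mathbf{v_i}: i\le\ell,\ v_i\in N[v_\ell]\}$, and a short direct check using the strong-elimination inequality shows $T_\ell\overshadows T_k$ whenever $\ell>k$. This global construction completely sidesteps the inductive bookkeeping you anticipate.

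Your inductive plan, by contrast, has a gap you have not closed. When you attach the branch $B$ at $\mathbf{a}\in\bigcap_i T_{u_i}$ and push the $T_{u_i}$ onto it, you may break compatibility between some $T_{u_i}$ and the subtree $T_w$ of a \emph{non}-neighbour $w$ of $v$: if previously $T_w\overshadows T_{u_i}$ with cutoff value $\Theta>d(\mathbf{a})$, then the new nodes you add to $T_{u_i}$ on $B$ sit in $T_{u_i}\setminus T_w$ at depth just above $d(\mathbf{a})<\Theta$, so $T_w\notovershadows T_{u_i}$ afterwards. Taking the weights on $B$ \emph{small} makes this worse, not better. You also need the overshadowing order among $T_{u_1},\dots,T_{u_k}$ in the inductively-obtained representation to agree with the inclusion order $N[u_1]\subseteq\cdots\subseteq N[u_k]$ in $G$, and nothing in your hypothesis guarantees that. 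These issues may be repairable with a stronger induction hypothesis, but they are exactly why the paper's one-shot construction from the elimination order is the cleaner path.
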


The idea of the proof given here is taken directly from Farber's thesis, but I greatly reworded the 
proof and expanded numerous details to make them---in my biased opinion---easier to parse.   
(The original proof is in the appendix.)

Some questions naturally arise from the result and proof.   First, the
``overshadowing'' relationship deserves further explanation; it is not a
partial order but some properties can nevertheless be shown.    Second,
the sufficiency part of the theorem is shown
by arguing that any graph with a compatible tree-representation has a simple 
vertex.    The more common (and algorithmically more useful) characterization
of strongly chordal graph is the existence of strong elimination order.  Getting
a strong elimination order, given the existence of simple vertices, is not immediate (in particular
there are examples where simply enumerating the vertices such that each of them
is simple with respect to the graph of later vertices is \emph{not} a perfect elimination order).
The algorithm that Farber gave for finding a strong elimination order \cite{Farber83}
is not particular straightforward, and not particularly fast.   I therefore give
a different algorithm that reads the strong elimination order directly from a
compatible tree-representation.

\begin{restatable}{theorem}{TreerepPEO}
%\begin{theorem}
\label{thm:treerep_peo}
Given a compatible tree-representation
that defines a graph $G$ with $n$ vertices and $m$ edges, one can find
a strong elimination order of $G$ in $O(n+m)$ time, presuming it is
known for any two adjacent vertices $v,w$ whether tree $T(v)$
overshadows $T(w)$.
%\end{theorem}
\end{restatable}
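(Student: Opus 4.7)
My plan is to read a strong elimination order directly off a post-order traversal of $T$. I would root $T$ arbitrarily and, for each $v\in V(G)$, let $r(v)$ denote the node of $T(v)$ closest to the root; since the input provides pairwise overshadowing only for adjacent vertices, I would identify the tops by a single traversal that exploits the fact that when $T(u)\overshadows T(v)$ for adjacent $u,v$, the top $r(v)$ must lie inside $T(u)$. With the tops in hand, I would visit the nodes of $T$ in post-order (children before parents) and, at each tree node $x$, emit the vertices with $r(v)=x$ sorted by overshadowing, the one overshadowed by all the others in its group coming first.

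To verify correctness I would use the characterization that $v_1,\dots,v_n$ is a strong elimination order iff each $v_i$ is simple in the induced subgraph $G_i:=G[\{v_i,\dots,v_n\}]$. Fix $v_i$ and consider any neighbor $u$ of $v_i$ in $G_i$. Because $T(u)$ meets $T(v_i)$, their tops $r(u)$ and $r(v_i)$ lie on a common root-to-leaf path of $T$; moreover $r(u)$ cannot be strictly below $r(v_i)$, or else $u$ would have been emitted before $v_i$. Hence either $r(u)$ is strictly above $r(v_i)$, or $r(u)=r(v_i)$ and $u$ follows $v_i$ within its group. In both cases I would argue that $T(u)\overshadows T(v_i)$. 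Pairwise compatibility then linearly orders the subtrees $T(u)$ for $u\in N_{G_i}[v_i]\setminus\{v_i\}$ under overshadowing, and I would translate this linear order into a chain of closed neighborhoods $N_{G_i}[u]$ under inclusion, which is exactly the condition for $v_i$ to be simple in $G_i$.

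Each step is $O(n+m)$: computing tops, bucketing by top, sorting each bucket by overshadowing (every comparison involves an edge of $G$, so the answer is available in $O(1)$ time), and reading out in post-order. The main obstacle will be the last leg of the simplicity argument: turning ``$T(u)\overshadows T(u')$'' (with both overshadowing $T(v_i)$) into ``$N_{G_i}[u]\supseteq N_{G_i}[u']$''. Since overshadowing is a weighted-tree notion rather than plain set containment, care is needed to verify that no would-be common neighbor of $u$ and $u'$ survives in $G_i$ in a way that breaks the chain; I expect this to follow from combining global compatibility with the observation that any $w\in N_{G_i}[u']\setminus\{u'\}$ has $r(w)$ at or above $r(v_i)$, and so its subtree must intersect anything overshadowing $T(u')$.
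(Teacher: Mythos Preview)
Your construction is close to the paper's, but the verification strategy rests on a false premise. You claim that $v_1,\dots,v_n$ is a strong elimination order iff each $v_i$ is simple in $G_i=G[\{v_i,\dots,v_n\}]$; this equivalence does not hold. Take $G$ to be two triangles $\{1,3,4\}$ and $\{2,4,5\}$ glued at $4$ (so the edges are $13,14,34,24,25,45$). In the order $1,2,3,4,5$ every $v_i$ is simple in $G_i$: for $i=1$ we have $N[1]=N[3]\subseteq N[4]$, and for $i=2$ we have $N_{G_2}[2]=N_{G_2}[5]\subseteq N_{G_2}[4]$. Yet this order violates the strong-elimination condition at $i=2$, $j=3$, $k=4$, $\ell=5$: we have $4,5\in N[2]$ and $4\in N[3]$, but $5\notin N[3]$. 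Being simple only says the closed neighbourhoods of $N_{G_i}[v_i]$ form \emph{some} chain; a strong elimination order demands that this chain is \emph{increasing in the index}, which is strictly more. So even if you carry out the ``last leg'' you flag as the obstacle, you will have proved the wrong thing.

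Your actual ordering (post-order on the tops, ties broken by overshadowing) is essentially the paper's order in disguise: the paper builds a digraph $H$ on $V(G)$ with an arc $v\to w$ whenever $(v,w)\in E(G)$ and $T(v)\notovershadows T(w)$, shows $H$ is acyclic (this is where compatibility is used globally), and takes any topological order. Your post-order with tie-breaking is one such topological order, because whenever $r(v)$ is a strict descendant of $r(w)$ we have $T(v)\notovershadows T(w)$, and incomparable tops give disjoint subtrees. The paper then verifies the four-index strong-elimination condition directly from ``$T(v_j)\overshadows T(v_i)$ for all $i<j$'' (its Lemma~\ref{lem:bottom_up_good}), rather than going through simplicity. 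That direct verification is what your plan is missing; replacing your simplicity argument with it would fix the proof.
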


\iffalse
Second, Farber allowed weights on the arc of the host-tree for his
intersection representation.    This is rather unusual (related
graph classes such as chordal graphs, interval graphs, path graphs
and others are defined on unweighted trees), and one naturally asks
whether weights are required, or whether perhaps one could switch
to unweighted trees by subdividing arcs suitable.   I answer this
question.

\begin{restatable}{theorem}{NotUnweighted}
%\begin{theorem}
\label{thm:not_unweighted}
There are strongly chordal graphs  such that in
any compatible tree representations the host-tree
must have arcs with different weights.
%\end{theorem}
\end{restatable}
\fi

\section{Preliminaries}
\label{sec:preliminaries}

This section clarifies some notation for a graph $G=(V,E)$
(for most standard notation, see for example Diestel \cite{Die12}).
For a vertex $v$, write $N[v]$ for the
\emph{closed neighbourhood} of $v$, i.e., $N[v]=\{w: w{=}v\text{ or } (w,v) \text{ is an edge}\}$.

Let $T$ be a rooted tree (the \emph{host tree}) and let $T_1,\dots,T_n$ be a collection of subtrees of $T$
This defines a graph $G$ (the \emph{intersection graph} of $T_1,\dots,T_n$) that has
vertices $v_1,\dots,v_n$ and an edge $(v_i,v_j)$ if and only if $V(T_i)\cap V(T_j)\neq \emptyset$.
We use the terms ``vertex/edge'' for the elements of $G$, while elements of host-tree $T$
are called ``node/arc'' for ease of distinction.    We describe subtree $T_i$ by its node set,
so we can speak of the sets $T_i\cap T_j$, $T_i\cap T_j$ etc.

Given a graph $G$, a \emph{tree representation}
of $G$ consists of a rooted tree $T$ and a subtree $T(v)$ of $T$
for every vertex of $G$ such that $G$ is the intersection graph of $\{T(v): v\in V\}$.
Given a tree representation of $G$ and a vertex $v\in V$, we use $\mathbf{v}$ for the node
of host-tree $T$ that is the root of subtree $T(v)$.    
%Generally we use bold-face letters for nodes of $T$,  and write $\beta(\mathbf{w})$ 
%for the set of all vertices of $G$ whose subtree includes $\mathbf{w}$.  
Figure~\ref{fig:example}(a-b) illustrates these concepts.

\begin{figure}[ht]
\subfigure[~]{\raisebox{30mm}{\includegraphics[scale=0.7,page=2]{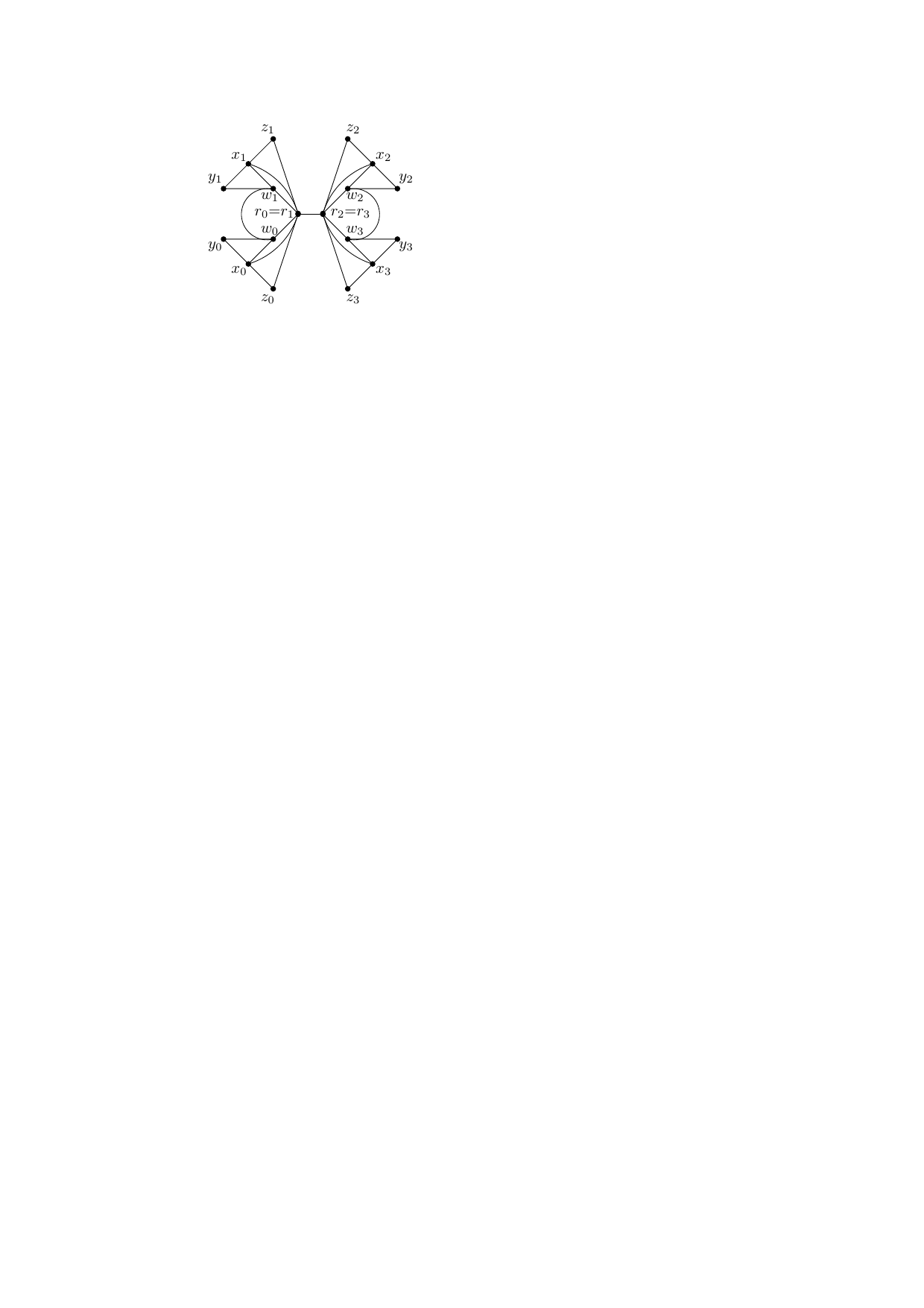}}}
\subfigure[~]{
\begin{minipage}[b]{0.3\linewidth}
%\vspace{0pt}
\begin{tikzpicture}[scale=0.8,level distance=40pt,
level 1/.style={sibling distance=200pt},
level 2/.style={sibling distance=100pt},
level 3/.style={sibling distance=20pt},
level 4/.style={sibling distance=20pt},
level 5/.style={sibling distance=20pt},
edgelabel/.style={draw=none,fill=white,inner sep=0.5,font=\scriptsize},
every node/.style={rounded corners,inner sep=1.5,minimum width=10mm,minimum height=1ex,draw,font=\scriptsize},
label/.style={right,draw=none,font=\scriptsize,color=black},
font = \footnotesize
]
\node (root) {$\mathbf{z}$}
            child {node [] (l3) {$\mathbf{y},z$}
            	child {node [] (l4) {$\mathbf{x},y,z$}
               	    child {node (l5) {$\mathbf{w},x,y,z$}
               	    	child {node [dotted,circle,minimum width=3mm] {}
	               	    child {node [] (l7) {$\mathbf{b},w,x$} }
				child{ edge from parent[draw=none] }
				child{ edge from parent[draw=none] }
		        }
               	    	child {node (l6) [dotted,circle,minimum width=3mm] {}
               	    	    child {node [dotted,circle,minimum width=3mm] (u1) {}
	               	        child {node [] (l8) {$\mathbf{a},w,x$} }
		            }
		        }
               	    	child {node [right=-1mm] {$\mathbf{c},x,y$} }
	            }
	        }
            }
;

\node [draw=none] (rlabel) at ([yshift=-1mm,xshift=60pt]root.south) {};
\draw [draw=none] ([yshift=-1mm]root.south) -- ([yshift=-0pt]rlabel.west) node [midway,above=1ex,label] {
\begin{minipage}{10mm}
$T(w)\notovershadows T(z)$ 
\end{minipage}};
%\draw [thick,dotted,red] ([yshift=-1mm]l3.south) -- ([yshift=-40pt]rlabel.west) node [label] {\begin{minipage}{10mm}
%\end{minipage}};
\draw [draw=none] ([yshift=-1mm]l4.south) -- ([yshift=-80pt]rlabel.west) node [midway,above=1ex,label] {
\begin{minipage}{10mm}
$T(w)\notovershadows T(x)$ 
\end{minipage}};
%\draw [thick,dotted,red] ([yshift=-1mm]l5.south) -- ([yshift=-120pt]rlabel.west) node [label] {\begin{minipage}{10mm}
%\end{minipage}};
\draw [thick,dotted,red] ([yshift=-1mm]l6.south) -- ([yshift=-161pt]rlabel.west) node [midway,below=1ex,label] {\begin{minipage}{10mm}
$\Theta(x,y)$
\end{minipage}};
%\draw [thick,dotted,red] ([yshift=-1mm]l7.south) -- ([yshift=-201pt]rlabel.west) node [label] {\begin{minipage}{10mm}
%\end{minipage}};
\draw [thick,dotted,red] ([yshift=-1mm]l8.south) -- ([yshift=-241pt]rlabel.west) node [midway,below=1ex,label] {\begin{minipage}{10mm}
$\Theta(x,w)$
\end{minipage}};
\node [above right = -1mm and -4mm of u1,draw=none] {$\mathbf{u_1}$};
\node [above right = -1mm and -4mm of l6,draw=none] {$\mathbf{u_2}$};
\end{tikzpicture}
\end{minipage}
} % subfigure
\hspace*{\fill}
\subfigure[~]{\includegraphics[scale=0.89,page=1]{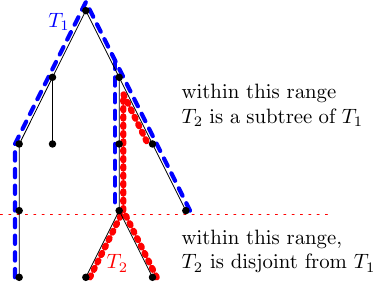}}
\hspace*{\fill}
\caption[A chordal graph with a tree representation]{(a) A chordal graph (actually strongly chordal) with a perfect elimination order (actually a strong
elimination order). (b) The tree-representation that we get when applying Lemma~\ref{lem:order_treerep}.
Symbol 
\begin{tikzpicture}\node[rounded corners,inner sep=1.5,minimum width=10mm,draw,font=\scriptsize]{$\mathbf{x},y,z$}; \end{tikzpicture} 
means that this is node $\mathbf{x}$ and it belongs to $T(x),T(y)$ and $T(z)$.
Dotted lines indicate cutoff values for some edges; we also indicate some witnesses for not overshadowing.
(c) Abstract illustration of the concept of overshadowing.
}
\label{fig:example}
\end{figure}

Gavril proved in 1974 that a graph is an
intersection graph of subtrees of a tree if and only if it is chordal \cite{Gavril74}.
Farber in his thesis \cite{FarberThesis} gave a different proof of this result
that reads the tree representation directly from a perfect elimination order (we
review parts of it in the appendix).   The following is obvious but will be needed below:

\begin{observation}
\label{obs:intersection}
\label{obs:intersect}
Let $\{T(v)\}$ be a tree-representation of a graph $G$, and let $v,w$ be two vertices with $v\in N[w]$.    Then $\mathbf{v}\in T(w)$ if and only if $d(\mathbf{v})\geq d(\mathbf{w})$.
\end{observation}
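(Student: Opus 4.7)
The plan is simply to unpack the definitions, using both the rootedness of $T$ and the property that a subtree of $T$ is closed under taking paths in $T$. Recall that by definition $\mathbf{u}$ is the top-most node of $T(u)$, so $\mathbf{u}$ is an ancestor in $T$ of every other node of $T(u)$; I read $d(\cdot)$ as depth in the host tree.

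The forward direction is immediate: if $\mathbf{v}\in T(w)$, then $\mathbf{w}$, being the root of $T(w)$, is an ancestor of $\mathbf{v}$ in $T$, so $d(\mathbf{v})\geq d(\mathbf{w})$. For the converse, I invoke the hypothesis $v\in N[w]$. If $v=w$ the claim is trivial; otherwise $vw\in E$, and by the definition of the intersection graph $T(v)\cap T(w)\neq\emptyset$. Pick any node $x\in T(v)\cap T(w)$. Since $\mathbf{v}$ is an ancestor of $x$ and $\mathbf{w}$ is an ancestor of $x$, both lie on the unique path in $T$ from $x$ to the root of $T$, so one is an ancestor of the other. The assumption $d(\mathbf{v})\geq d(\mathbf{w})$ forces $\mathbf{w}$ to be the ancestor, and then the $x$-to-$\mathbf{w}$ path in $T$ passes through $\mathbf{v}$. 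Because $T(w)$ is a subtree of $T$ containing both endpoints $x$ and $\mathbf{w}$, it must contain the entire path, and in particular $\mathbf{v}\in T(w)$.

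There is no real obstacle here. The only mild subtlety is to simultaneously use the two defining features of ``rooted subtree'' — namely that the top-most node acts as a canonical root, and that a subtree is path-closed in $T$ — in order to convert a depth inequality into actual membership.
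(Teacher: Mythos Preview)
Your proof is correct and follows essentially the same idea as the paper's: both arguments pick a common node of $T(v)\cap T(w)$, observe that $\mathbf{v}$ and $\mathbf{w}$ are both ancestors of it (hence comparable along the root-path), and conclude from the depth inequality which one lies in the other's subtree. The paper compresses both directions into a single argument by first walking the common node up to the top of the intersection, whereas you treat the two implications separately and invoke path-closure of $T(w)$ explicitly; the content is the same.
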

\begin{proof}
Since $v\in N[w]$, we have a node $\mathbf{x}\in T(v)\cap T(w)$.   By
replacing $\mathbf{x}$ by its parent, if needed, we may assume that $\mathbf{x}$
either has no parent, or the parent of $\mathbf{x}$ is not in $T(v)\cap T(w)$.
But then $\mathbf{x}$ is the root of one of $T(v)$ and $T(w)$, and the other root is an ancestor of $\mathbf{x}$, hence cannot have greater depth.
\end{proof}

\section{Representations of strongly chordal graphs}

Recall that a strongly chordal graph is a chordal graph where every
even-length cycle $C$ of length at least 6 has a chord whose endpoints have odd distance along $C$.
Two equivalent characterizations of strongly chordal graphs were proved by
Farber \cite{Farber83}.
The first one says says that a graph is strongly chordal if and only if it has
a strong elimination order.    
\begin{quotation}
A \emph{strong elimination order} of a graph $G=(V,E)$ is an ordering  $v_1,\dots,v_n$ of $V$
with the property that for each $i,j,k$ and $\ell$, if $i<j$, $k<\ell$, $v_k,v_\ell\in N[v_i]$
and $v_k\in N[v_j]$ then $v_\ell\in N[v_j]$.
\end{quotation}
The second equivalent characterization of strongly chordal graphs uses the concept of a simple vertex.
\begin{quotation}
A vertex $v$ is called \emph{simple} if the closed neighbourhood $N[v]$  of $v$
can be ordered as $u_1,\dots,u_d$ such that $N[u_1]\subseteq \dots \subseteq N[u_d]$.   
\end{quotation}
Farber showed that a graph $G$ is strongly chordal if and only if every induced subgraph of $G$ has a simple vertex \cite{Farber83}.
We now include an intersection representation in these equivalences, but need some definitions first.

Let $\{T(v)\}$ be a tree-representation of a graph $G$, with host tree $T$.
We permit $T$ to have \emph{arc-weights}, i.e., an assignment of positive integer weights
to the arcs of $T$.    (The dotted nodes in Figure~\ref{fig:example}(b) are
added to illustrate non-unit weights.)
Define the \emph{depth} $d(\mathbf{w})$ of a node $\mathbf{w}$ of $T$ to be the
weighted distance from the root, i.e., 
the sum of the edge-lengths on the unique path from $\mathbf{w}$ to the root of $T$.
In what follows, terms such as ``depth'' and ``farther away from the root'' always
refer to this \emph{weighted} version.

Let $T_1$ and $T_2$ be two subtrees of $T$.   We say that
$T_1$ \emph{overshadows} $T_2$,
written $T_1\overshadows T_2$,%
\footnote{Both name and symbol deviate from Farber's thesis; see Section~\ref{sec:partial_order} for further discussion.}
if all nodes of $T_2\setminus T_1$ are strictly farther away from the root than all nodes in $T_2\cap T_1$.
See also Figure~\ref{fig:example}(b-c). 
Clearly two disjoint trees overshadow each other, so the condition is non-trivial only if $T_1$ and $T_2$ have nodes in common. 
We can then give two equivalent statements that we find helpful in illustrations and arguments.
Define $\Theta$ (the \emph{cutoff value}) to be the maximum distance of a node in $T_1\cap T_2$.
Then $T_1\overshadows T_2$ (for non-disjoint trees) if and only if, restricted to the nodes of depth at most $\Theta$,
tree $T_2$ is a subset of $T_1$, while, restricted to the nodes of depth more than $\Theta$, tree $T_2$ is disjoint from $T_1$.
It is perhaps helpful to state when a tree $T_1$ does \emph{not} overshadow tree $T_2$
(which we denote by $T_1\notovershadows T_2$):   This happens if and only if there is
a \emph{witness-node} $\mathbf{y} \in T_2\setminus T_1$ for which the depth is at most the cutoff value.

Two trees $T_1,T_2$ are called \emph{compatible}
if $T_1\overshadows  T_2$ or $T_2\overshadows T_1$ (or both),
and a tree-repre\-sen\-tation $\{T(v)\}$ is called \emph{compatible}
if all pairs of trees are compatible.
For example the subtrees in Figure~\ref{fig:example}(b) are compatible.

With this, all the ingredients for Theorem~\ref{thm:main} have 
been defined, and we can now review its proof.    This is done
in a cycle of implication, where the third one
(the existence of simple vertices implies the existence
of a strong elimination order) can be found in \cite{Farber83}.
We state the other two implications
as separate lemmas so we can emphasize (under
``Furthermore'') some side-effects of the proof that may be of interest.

\begin{lemma}
\label{lem:order_treerep}
\label{lem:ordering_treerep}
If $G$ has a strong elimination order $v_1,\dots,v_n$,
then $G$ has a compatible tree-representation $\{T(v)\}$.

Furthermore, the host tree has exactly $n$ nodes, corresponding
to the roots $\mathbf{v_1},\dots,\mathbf{v_n}$ of the
trees representing the vertices, and $\mathbf{v_j}$ has depth $n-j$
for all $j\in \{1,\dots,n\}$.
The representation can be found in linear time.
\end{lemma}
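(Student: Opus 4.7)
I would build the host tree directly from the strong elimination order by processing the vertices in reverse. Place $\mathbf{v_n}$ as the root at depth $0$; for each $j<n$, let $k(j)$ be the smallest index strictly greater than $j$ with $v_{k(j)}\in N(v_j)$ (or $k(j)=n$ if $v_j$ has no later neighbor), and attach $\mathbf{v_j}$ as a child of $\mathbf{v_{k(j)}}$ with arc weight $k(j)-j$, which makes $\mathbf{v_j}$ sit at depth $n-j$. The subtrees are declared to be $T(v_j):=\{\mathbf{v_\ell}:\ell\leq j\text{ and }v_\ell\in N[v_j]\}$. The ``furthermore'' clauses are built in---exactly $n$ nodes $\mathbf{v_1},\dots,\mathbf{v_n}$ at the prescribed depths, and the whole construction runs in $O(n+m)$ time if adjacency lists are stored sorted by elimination rank.

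The first thing to verify is that each $T(v_j)$ is a connected subtree rooted at $\mathbf{v_j}$. For any $\ell<j$ with $v_\ell\in N(v_j)$, I iterate the parent map to form a strictly increasing chain $\ell=k_0<k_1<k_2<\dots$ with $k_{t+1}=k(k_t)$; simpliciality of $v_{k_t}$ in $G[\{v_{k_t},\dots,v_n\}]$---a standard consequence of strong elimination---makes its later neighbors a clique, so $v_j$ remains adjacent to $v_{k_t}$ as long as $k_t<j$, and the chain must reach $j$. Each $\mathbf{v_{k_t}}$ on the way is an earlier neighbor of $v_j$ and thus lies in $T(v_j)$. For the intersection-graph check, the nontrivial case is $j<i$ with $(v_i,v_j)\notin E$: if some $v_\ell$ with $\ell<j$ were a common neighbor of $v_i$ and $v_j$, the defining property of a strong elimination order applied with $a=\ell$, $b=j$, $c=j$, $d=i$ (using $v_j,v_i\in N[v_\ell]$ and $v_j\in N[v_j]$) would force $v_i\in N[v_j]$, contradicting the non-edge.

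Compatibility is where the main work sits. Disjoint pairs are trivial, so assume $j<i$ and $(v_i,v_j)\in E$ and aim at $T(v_i)\overshadows T(v_j)$. Let $\ell^*:=\min\{\ell'\leq j:v_{\ell'}\in N[v_i]\cap N[v_j]\}$, so that the cutoff value is $\Theta=n-\ell^*$, and take any $\mathbf{v_\ell}\in T(v_j)\setminus T(v_i)$, necessarily with $\ell<j$, $v_\ell\in N(v_j)$, and $v_\ell\notin N(v_i)$. I need to show $\ell<\ell^*$. Otherwise $\ell^*<\ell$ strictly (equality would place $v_\ell=v_{\ell^*}\in N(v_i)$), and the strong elimination property applied with $a=\ell^*$, $b=\ell$, $c=j$, $d=i$ (valid because $v_j,v_i\in N[v_{\ell^*}]$ and $v_j\in N[v_\ell]$) forces $v_i\in N[v_\ell]$, contradicting $\mathbf{v_\ell}\notin T(v_i)$; hence $n-\ell>\Theta$ as required.

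The main obstacle throughout is threading the four-index strong elimination property correctly: its hypotheses impose specific inequalities on the indices, so the assignments of $a,b,c,d$ must be chosen afresh for each structural property. The compatibility step is the most delicate, essentially re-deriving that for two later-adjacent vertices their smallest common earlier neighbor dominates every subsequent earlier neighbor of either. Everything else (connectivity of $T(v_j)$, absence of spurious intersections, the prescribed depths, and the linear-time bound) follows once the right strong elimination instances are in place.
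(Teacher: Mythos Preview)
Your construction is identical to the paper's: same host tree (root $\mathbf{v_n}$, parent of $\mathbf{v_j}$ is $\mathbf{v_{k(j)}}$ with arc weight $k(j)-j$), same subtrees $T(v_j)=\{\mathbf{v_\ell}:\ell\le j,\ v_\ell\in N[v_j]\}$, and the same three verification steps (connectivity via iterating parents and simpliciality, intersection-graph correctness via the strong-elimination condition with two of the four indices equal, compatibility by taking the deepest common node and deriving a contradiction from a shallower witness). Aside from a slight slip in phrasing in the connectivity step (simpliciality of $v_{k_t}$ gives adjacency of $v_j$ with $v_{k_{t+1}}$, not with $v_{k_t}$, which you already have inductively), the argument matches the paper's essentially line for line.
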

\begin{proof} 
Construct a tree $T$ and subtrees $T_j$ for $v_j\in V$ as follows:
\begin{itemize}
\item Node $\mathbf{v_n}$ is the root of the tree.
\item For $j=n{-}1,\dots,2,1$, let $v_k$ be the \emph{first strict successor} of $v_j$,
	i.e., let $k$ be minimal such that $k>j$ and $v_k\in N[v_j]$.
	(If there is no such neighbour, then define $k:=n$.)
	Make node $\mathbf{v_j}$ the child of $\mathbf{v_k}$, and give 
	weight $k-j$ to this arc.    Using induction one
	then sees that $\mathbf{v_j}$  has depth $n-j$.
\item For $k=1,\dots,n$, let $T_k$ by the subgraph of $T$ induced by the 
	nodes of the \emph{predecessors} of $v_k$, i.e., $T_k=\{\mathbf{v_i}: i\leq k \text{ and } v_i\in N[v_k]\}$.
	Note that this includes $\mathbf{v_k}$, and (by definition of a predecessor) $ \mathbf{v_k}$ has the smallest depth among the nodes in $T_k$, so is its root.
\end{itemize}

The tree in Figure~\ref{fig:example}(b) has been constructed with this
method, using the vertex order from Figure~\ref{fig:example}(a).
Clearly this representation can be found in $O(n+m)$ time if $G$
has $m$ edges:   Tree $T(v_j)$ contains at most $\deg(v_j)$ nodes,
and they can be read directly from the neighbourhood of $v_j$ by
comparing indices.

We now must show three things:   (1) Each node-set $T_\ell$ actually
forms a subtree, i.e., induces a connected set;
(2) $G$ is the intersection graph of $\{T_1,\dots,T_n\}$;
and (3) the trees are compatible.     
Claims (1) and (2) are straightforward for any perfect elimination order,
but will be repeated here for completeness.    

To see (1), it suffices
to show that for any predecessor $v_i$ of $v_\ell$,
the entire path from $\mathbf{v_i}$ to $\mathbf{v_\ell}$ in $T$ belongs to $T_\ell$.
This is obvious if $i=\ell$, so assume not.  We then have $i<\ell\leq n$ by choice of $T_\ell$, so $\mathbf{v_i}$ has a parent, say $\mathbf{v_k}$.
Since $v_\ell$ is a strict successor of $v_i$, so must be $v_k$, 
and $k\leq \ell$ by choice of parent of $\mathbf{v_i}$.
By $i<k$ we have $d(\mathbf{v_i})>d(\mathbf{v_k})$, hence $\mathbf{v_i}\in T_k$ by Observation~\ref{obs:intersect}.   So $\mathbf{v_i}$ is common to
$T_k$ and $T_{\ell}$, which gives $v_k\in N[v_\ell]$ and by $k\leq \ell$ 
therefore $\mathbf{v_k}\in T_\ell$.
By induction on the depth, the path from $\mathbf{v_k}$ to $\mathbf{v_\ell}$
is in $T_\ell$, and therefore so is the one from $\mathbf{v_i}$.

To see (2), observe that if $v_k\in N[v_\ell]$, where (say) $k\leq \ell$, 
then $\mathbf{v_k}\in T_\ell$, and since $\mathbf{v_k}\in T_k$ therefore
the two trees intersect.   Vice
versa, if some node (say $\mathbf{v_i}$) is in $T_k \cap T_\ell$ for some
$k<\ell$, then
$v_i$ is a predecessor of both $v_k$ and $v_\ell$.   We then must have
edge $(v_k,v_\ell)$ by applying the condition of a strong elimination order
with $j{=}k$.

To show (3), we will show something
stronger, namely, that $T_\ell \overshadows T_k$ for any
$\ell>k$.    This clearly holds if the trees are disjoint,
so assume that $T_\ell$ and $T_k$ share nodes, and let
$\mathbf{v_i}$ be a node in $T_\ell\cap T_k$ with maximal depth.
We claim that $D:=n-i$ works as cutoff value for relationship
$T_\ell \overshadows T_k$.
By definition, no node of depth greater than $D$ belongs to $T_\ell\cap T_k$,
so we only have to show that no node of $T_k\setminus T_\ell$ has depth at most $D$.
Assume for contradiction that there is a node $\mathbf{v_j}\in T_k\setminus T_\ell$
with $d(\mathbf{v_j})\leq D$.   We know $d(\mathbf{v_j})=n-j$ while $D=n-i$, so
$j\geq i$, and actually $j>i$ since $\mathbf{v_i}\in T_\ell$.    
We also know that $j\leq k$ since $\mathbf{v_j}\in T_k$, 
so $v_j$ is a predecessor of $v_k$ (possibly $j=k$).    In summary, we have $i<j$, $k<l$,
edges $(v_i,v_k)$, $(v_i,v_\ell)$ and $v_j\in N[v_k]$.   Since we have
a strong elimination order therefore $v_j\in N[v_\ell]$, and by $j\leq k<\ell$
it is a predecessor of $v_\ell$.    This contradicts $\mathbf{v_j}\not\in T_\ell$.
\end{proof}

For any graph that has a compatible tree-representation, any induced subgraph
also has a compatible tree-representation.   Therefore for the sufficiency
of Theorem~\ref{thm:main}, it suffices to show the following.

\begin{lemma}
\label{lem:treerep_simple}
Let $G$ be a graph that has a compatible tree-representation $\{T(v)\}$. 
Then $G$ has a simple vertex. 

Furthermore, any vertex $v$ that maximizes the depth of the root $\mathbf{v}$ of $T(v)$ is a simple vertex.
\end{lemma}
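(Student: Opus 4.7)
The plan is to order the closed neighborhood of $v$ via the overshadowing relation on the corresponding subtrees, and show that this ordering witnesses simplicity. First, since $v$ maximizes $d(\mathbf{v})$, Observation~\ref{obs:intersect} immediately gives $\mathbf{v}\in T(u)$ for every $u\in N[v]$. Consequently, for any two $u_1,u_2\in N[v]$, the trees $T(u_1)$ and $T(u_2)$ share at least the node $\mathbf{v}$, so by compatibility one must overshadow the other. This defines a total preorder on $N[v]$.

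The heart of the proof is a monotonicity claim: whenever $u_1,u_2\in N[v]$ and $T(u_1)\overshadows T(u_2)$, then $N[u_2]\subseteq N[u_1]$. Fix $w\in N[u_2]$, so $T(w)\cap T(u_2)\neq\emptyset$, and apply compatibility to this pair. Either $T(u_2)\overshadows T(w)$, in which case the root $\mathbf{w}$ of $T(w)$ must lie in $T(u_2)$---otherwise $\mathbf{w}\in T(w)\setminus T(u_2)$ would sit strictly deeper than the cutoff of $T(u_2)\overshadows T(w)$, which is the depth of some node in $T(w)\cap T(u_2)\subseteq T(w)$, contradicting that $\mathbf{w}$ is shallowest in $T(w)$. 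Or $T(w)\overshadows T(u_2)$, and the symmetric argument forces $\mathbf{u_2}\in T(w)$.

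To close the case analysis I would invoke that $\mathbf{v}\in T(u_1)\cap T(u_2)$ makes the cutoff $\Theta$ of $T(u_1)\overshadows T(u_2)$ satisfy $\Theta\geq d(\mathbf{v})$, and that the maximality of $d(\mathbf{v})$ gives $d(\mathbf{w}),d(\mathbf{u_2})\leq d(\mathbf{v})\leq\Theta$. Thus the candidate node ($\mathbf{w}$ in the first case, $\mathbf{u_2}$ in the second) sits in $T(u_2)$ at depth at most $\Theta$, and the overshadowing condition $T(u_1)\overshadows T(u_2)$ forces it into $T(u_1)$ as well. So $T(w)\cap T(u_1)\neq\emptyset$, i.e., $w\in N[u_1]$, proving $N[u_2]\subseteq N[u_1]$.

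Finally, any linearization of the overshadow preorder on $N[v]$---equivalently, any ordering by non-increasing root depth, which places $v$ itself first---produces $u_1,\dots,u_d$ with $N[u_1]\subseteq\cdots\subseteq N[u_d]$, certifying that $v$ is simple. The main obstacle I foresee is the two-case pinpointing of which root ($\mathbf{w}$ or $\mathbf{u_2}$) is forced into which tree: the maximality of $d(\mathbf{v})$ must be spent at precisely the moment one needs the candidate node to land below the cutoff $\Theta$, and one has to make sure the argument does not silently assume $w\in N[u_1]$, which is the very conclusion being sought.
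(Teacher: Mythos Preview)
Your proof is correct and follows essentially the same strategy as the paper's: show $\mathbf{v}\in T(u)$ for all $u\in N[v]$, then for $u_1,u_2\in N[v]$ with one tree overshadowing the other, locate a root node at depth $\leq d(\mathbf{v})$ inside the overshadowed tree and use the cutoff bound to push it into the overshadowing tree. The only minor difference is that where you invoke compatibility of the pair $T(w),T(u_2)$ to determine which root lands in which tree, the paper instead applies Observation~\ref{obs:intersect} directly to the edge $(w,u_1)$ (so it never needs compatibility for any pair involving $w$) and finishes by contradiction; your extra appeal to compatibility is harmless but unnecessary.
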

\begin{proof}
Let $u_1,u_2$ be two vertices in $N[v]$.   Since we have a compatible tree-representation,
we have (up to renaming) that $T(u_2)\overshadows T(u_1)$.   We will show that this implies
$N[u_1]\subseteq N[u_2]$, and since the argument can be applied to any two vertices of $N[v]$,
the neighbourhood of $v$ can therefore be ordered suitably.

%Assume for contradiction that there exists a vertex $x\in N[u_1]\setminus N[u_2]$.   By Observation~\ref{obs:intersection} then $\mathbf{y}\in T(x)\cap T(u_1)$ for some $\mathbf{y}\in \{\mathbf{x},\mathbf{u_1}\}$.
%We also know that $\mathbf{y}\not\in T(u_2)$
%since $(x,u_2)$ is not an edge,  so $\mathbf{y}\in T(u_1)\setminus T(u_2)$.

From~Observation~\ref{obs:intersection}, we know that $\mathbf{v}\in T(u_i)$ or $\mathbf{u_i}\in T(v)$
for $i=1,2$.   But by choice of $v$ we have $d(\mathbf{v})\geq d(\mathbf{u_i})$, meaning that
$\mathbf{u_i}\in T(v)$ implies $\mathbf{u_i}=\mathbf{v}$.   So in both cases we have $\mathbf{v}\in T(u_i)$,
which implies that $T(u_1)\cap T(u_2)$ includes node $\mathbf{v}$ and $(u_1,u_2)$ is an edge.
Furthermore, the cutoff-value for $T(u_2)\overshadows T(u_1)$ is at least $d(\mathbf{v})$.

Now assume for contradiction that there exists a vertex $x\in N[u_1]\setminus N[u_2]$.
We have an edge $(x,u_1)$, so
by~Observation~\ref{obs:intersection} we have a node $\mathbf{y}\in T(x)\cap T(u_1)$
with $\mathbf{y}\in \{\mathbf{x},\mathbf{u_1}\}$.   
We also know that $\mathbf{y}\not\in T(u_2)$
since $\mathbf{y}\in T(x)$ and $x\not\in N[u_2]$.
Therefore $\mathbf{y}\in T(u_1)\setminus T(u_2)$,
which by the cutoff-value for $T(u_2)\overshadows T(u_1)$ implies $d(\mathbf{y})>d(\mathbf{v})$.
But $\mathbf{y}$ is the root of a subtree (either $T(x)$ or $T(u_1)$), 
which contradicts the choice of $v$.
\end{proof}

%%%%%%%%%%%%%%%%%%%%%%%%%%%%%%%%%%%%%%%%%%%%%%%%%%%%%%%%%%%%%%%%%%%%%%%%
\section{Discussion items}

\subsection{The $\overshadows$ relationship}
\label{sec:partial_order}

As mentioned earlier, the name ``$T_1$ overshadows $T_2$'' for the relationship was
newly introduced here; Farber called this ``$T_1$ is \emph{full}
with respect to $T_2$'', which I felt was not descriptive enough.
More importantly, Farber use ``$T_1>T_2$'' to denote this relationship,
which I felt was inappropriate since the relationship is not
necessarily a partial order.
In particular, it need not be transitive since for any two disjoint trees $T_1,T_2$
we have $T_1\overshadows T_2$; if we now choose another tree $T_3$
that intersects $T_1$ but not $T_2$ then we also have $T_2\overshadows T_3$
but it may or may not be true that $T_1\overshadows T_3$.   We can also
have circular relationships, even for intersecting trees, see Figure~\ref{fig:partial_order}.

\begin{figure}[ht]
\hspace*{\fill}
\subfigure[~]{\begin{tikzpicture}[scale=1.0,level distance=40pt,
edgelabel/.style={draw=none,fill=white,inner sep=0.5,font=\scriptsize},
every node/.style={rounded corners,inner sep=1.5,minimum width=10mm,minimum height=1ex,draw,font=\scriptsize},
label/.style={right,draw=none,font=\tiny,color=black},
font = \footnotesize
]

\node (root) {$\mathbf{3}$}
	child { node [] {$\mathbf{1},3$}}
	child { node [] {$\mathbf{2}$}}
;
\end{tikzpicture}
}
\hspace*{\fill}
\subfigure[~]{\begin{tikzpicture}[scale=1.0,level distance=40pt,
every node/.style={rounded corners,inner sep=1.5,minimum width=10mm,minimum height=1ex,draw,font=\scriptsize},
font = \footnotesize
]

\node (root) {$\mathbf{1},\mathbf{2},\mathbf{3}$}
	child { node [] {$1$}}
	child { node [] {$2$}}
	child { node [] {$3$}}
;
\end{tikzpicture}
}
\hspace*{\fill}
\subfigure[~]{\begin{tikzpicture}[scale=1.0,level distance=20pt,
every node/.style={rounded corners,inner sep=1.5,minimum width=10mm,minimum height=1.5ex,draw,font=\scriptsize},
font = \footnotesize
]

\node (root) (r) {~~}
	child { 
		child {
			node [label={above left:$\mathbf{z}{=}\mathbf{z_0}$}] (z0) {$0,1,2,\dots$}
			edge from parent [draw=none];
		}
		edge from parent [draw=none];
	}
	child { 
		child { 
			child {
				node [label={above right:$\mathbf{z_1}$}] (z1) {$1,2$}
				edge from parent [draw=none];
			}
			edge from parent [draw=none];
		}
		edge from parent [draw=none];
	}
	child { node [label={above:$\mathbf{y_0}$}] {$\neg 0, 1$} edge from parent[decorate, decoration=snake]}
;
\draw [decorate, decoration=snake] (r.south west) -- (z0.north);
\draw [decorate, decoration=snake] (r.south) -- (z1.north);
\end{tikzpicture}
}
\hspace*{\fill}

\caption{(a) Three compatible subtrees where $T_1\overshadows T_2 \overshadows T_3$, but $T_1\notovershadows T_3$.
(b) Three compatible intersecting subtrees $T_1\overshadows T_2 \overshadows T_3 \overshadows T_1$.
(c) For the proof of Lemma~\ref{lem:not_cyclic}.
}
\label{fig:partial_order}
\end{figure}
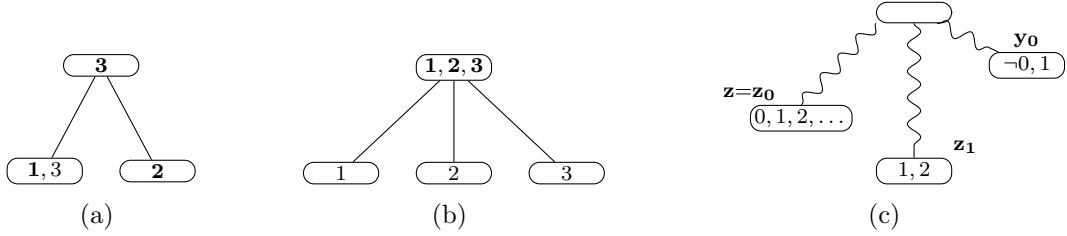

However, we can exclude a circular relationship,
at least in a compatible set of trees,
 if we look at the ``does not overshadow'' relationship, 

\begin{lemma}
\label{lem:not_cyclic}
Let $\{T_1,\dots,T_k{=}T_0\}$ be a set of $k\geq 2$ subtrees of a tree $T$ where
$T_0\notovershadows T_1$, $T_1 \notovershadows T_2$, \dots, $T_{k-1} \notovershadows T_k{=}T_0$.
Then the trees are not compatible.
\end{lemma}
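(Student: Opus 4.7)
The plan is to assume for contradiction that the trees are pairwise compatible. Combined with the hypothesis $T_i \notovershadows T_{i+1}$ (indices mod $k$), compatibility forces $T_{i+1} \overshadows T_i$ for each $i$. Since two disjoint subtrees trivially overshadow each other, $T_i \notovershadows T_{i+1}$ also implies $T_i \cap T_{i+1} \neq \emptyset$, so the cutoff value $\Theta_i := \max\{d(\mathbf{x}) : \mathbf{x}\in T_i \cap T_{i+1}\}$ is well-defined and the ``not overshadow'' hypothesis supplies a witness node $\mathbf{y}_i \in T_{i+1}\setminus T_i$ with $d(\mathbf{y}_i)\leq \Theta_i$.

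The key tool is the contrapositive reading of overshadowing: whenever $T_{i+1}\overshadows T_i$ and $\mathbf{z}\in T_i$ satisfies $d(\mathbf{z})\leq \Theta_i$, then $\mathbf{z}\in T_{i+1}$ (otherwise $\mathbf{z}$ would lie in $T_i\setminus T_{i+1}$ at depth at most $\Theta_i$, contradicting the overshadow definition). The strategy is to pick a single witness and chase it around the cycle using this tool. Choose the index $i_0$ for which $\mathbf{y}_{i_0}$ has minimum depth among $\mathbf{y}_0,\dots,\mathbf{y}_{k-1}$; after relabeling assume $i_0=0$. Then $\mathbf{y}_0\in T_1$, and as long as $\mathbf{y}_0\in T_i$ with $d(\mathbf{y}_0)\leq \Theta_i$, the contrapositive tool forces $\mathbf{y}_0\in T_{i+1}$. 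Iterating for $i=1,2,\dots$, the walk must eventually break down, since $\mathbf{y}_0\notin T_k=T_0$. Let $j$ be the largest index (with $1\leq j<k$) for which $\mathbf{y}_0\in T_j$.

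At this escape point, $\mathbf{y}_0\in T_j\setminus T_{j+1}$, and applying $T_{j+1}\overshadows T_j$ in its original form gives $d(\mathbf{y}_0)>\Theta_j$. But the witness $\mathbf{y}_j$ satisfies $d(\mathbf{y}_j)\leq \Theta_j<d(\mathbf{y}_0)$, contradicting the minimality of $d(\mathbf{y}_0)$. The main conceptual obstacle is that overshadowing is not transitive and a cycle of such relations cannot be composed directly; the workaround is to identify a depth-based invariant---being at most the local cutoff---that \emph{is} preserved by each successive overshadowing, and to use the minimum-depth choice of witness to force this invariant to fail at an escape index whose existence is guaranteed by the cycle's return to $T_0$.
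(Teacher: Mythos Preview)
Your proof is correct and follows the same overall strategy as the paper: assume consecutive pairs are compatible (so $T_{i+1}\overshadows T_i$), chase a well-chosen node around the cycle, and locate an index where it escapes to exhibit an incompatible pair. The one difference is the choice of extremal object. The paper picks the intersection node $\mathbf{z}_i$ of \emph{minimum} depth, first propagates this $\mathbf{z}$ into every $T_i$ (so that $\mathbf{z}$ witnesses a large cutoff at every step), and only then tracks $\mathbf{y}_0$ to its escape point; you instead pick the witness $\mathbf{y}_i$ of minimum depth and go directly to the escape argument, obtaining the contradiction from $d(\mathbf{y}_j)\leq\Theta_j<d(\mathbf{y}_0)$. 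Your variant collapses the paper's two-phase argument into one and is a mild streamlining, but the underlying idea is the same.
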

\begin{proof}
For $i=0,\dots,k{-}1$, since $T_i\notovershadows T_{i+1}$, there must exist
a witness-node, i.e., a node $\mathbf{y_i}\in T_{i+1}\setminus T_i$ that is no farther from the root
than some node $\mathbf{z_i}\in T_{i+1}\cap T_i$.   Let $\mathbf{z}$ be
the node in $\{\mathbf{z_0},\dots,\mathbf{z_{k-1}}\}$ that is closest to the root;
up to renaming we may assume that $\mathbf{z}=\mathbf{z_0}$.

If $T_2\notovershadows T_1$ then the two trees are not compatible and we are done.
So assume that $T_2\overshadows T_1$ and consider Figure~\ref{fig:partial_order}(c).    
We know that $d(\mathbf{z_1})\geq d(\mathbf{z})$ by choice of $\mathbf{z}$, 
and therefore $\mathbf{z}\in T_1$ must also be in $T_2$.
Repeating the argument one sees that in fact $\mathbf{z}$ belongs to \emph{all} of $T_1,\dots,T_k$.

Recall that $\mathbf{y_0}$ is no farther from the root than $\mathbf{z}$ and belongs to $T_1$
but not to $T_k{=}T_0$.   We therefore have an index $i$ with $1\leq i<k$
such that $\mathbf{y_0}\in T_i$ but $\mathbf{y_0}\not\in T_{i+1}$.    Since
$\mathbf{z}\in T_i\cap T_{i+1}$, %and $d(\mathbf{z})=d(\mathbf{z_0})\geq d(\mathbf{y_0})$,
therefore $T_{i+1}\notovershadows T_i$,
which shows that $T_i$ and $T_{i+1}$ are not compatible.
\end{proof}

\subsection{Obtaining strong elimination orders}

Of the many equivalent descriptions of strongly chordal graphs, the one most
useful for algorithm design appears to be the strong elimination order, which
was used for example for algorithms for matching and dominating set \cite{DahlhausK98,Farber84}. 
So a natural question is how one can obtain a strong elimination order,
given a compatible tree-representation.    Lemma~\ref{lem:treerep_simple}
shows how to get a simple vertex:  Take one whose subtree has the deepest root.
However, this does not given an ``obvious'' way to obtain a strong elimination
order.   In particular, if we sort vertices
by decreasing depth of their roots 
(obtaining what we call a \emph{bottom-up enumeration order}),
then each vertex is simple w.r.t.~the subgraph formed by the later vertices,
but it does not necessarily give a strong elimination order
(see the example in Figure~\ref{fig:example2}).
%\begin{observation}
%\label{obs:bottom_up_bad}
%There exists a compatible tree-representation such that the
%bottom-up enumeration order, with ties broken unsuitably,
%is \emph{not} a strong elimination order.
%\end{observation}
%\begin{proof}
%To see an example, consider the tree-representation in Figure~\ref{fig:example2}; one easily verifies
%that this is compatible with $T(k)\overshadows T(\ell)$.   One possible bottom-up enumeration order is $i,j,k,\ell$.
%We have edges $(i,k),(i,\ell)$ and $(j,k)$, but we do \emph{not} have edge $(j,\ell)$, so this is not
%a strong elimination order.
%\end{proof}

\begin{figure}[ht]
\hspace*{\fill}
\begin{tikzpicture}[scale=1.0,level distance=40pt,
edgelabel/.style={draw=none,fill=white,inner sep=0.5,font=\scriptsize},
every node/.style={rounded corners,inner sep=1.5,minimum width=10mm,minimum height=1ex,draw,font=\scriptsize},
label/.style={right,draw=none,font=\tiny,color=black},
font = \footnotesize
]

\node (root) {$\mathbf{k}, \mathbf{\ell}$}
	child {edge from parent[draw=none]}
	child { node [draw=none] {}
            child {node (l3) {$\mathbf{i},k,\ell$} }
		edge from parent [draw = none];
	}
	child { node [] {$\mathbf{j},k$}}
;
\draw (root.south) -- (l3.north);
\end{tikzpicture}
\hspace*{\fill}
\caption{A tree representation 
that is compatible (with $T(k)\overshadows T(\ell)$).   One possible bottom-up enumeration order is $i,j,k,\ell$,
which is \emph{not} a strong elimination order since there are
edges $(i,k),(i,\ell)$ and $(j,k)$, but no edge $(j,\ell)$.}
\label{fig:example2}
\label{fig:bottom_up_bad}
\end{figure}
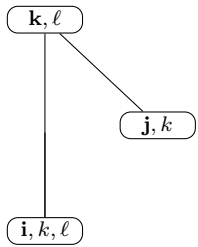

We can of course get a strong elimination order by applying the (algorithmic) proof
by Farber \cite{Farber83} that finds such an order as long as every induced subgraph
has a simple vertex.  However,
Farber's algorithm is somewhat complicated and involves ordering the vertices by
the subset-relation of their neighbourhoods while changing the graph.   The run-time
has not been analyzed, but is at least quadratic and possibly worse.   We give here
instead an algorithm that reads a strong elimination order directly from the
tree representation, and works in linear time if know for any edge $(v,w)$ 
which of $T(v),T(w)$ overshadows the other.   We need some helper-results.

\begin{observation}
\label{obs:compatible}
\label{obs:overshadow}
If $(v,w)$ is an edge and $T(v) \overshadows T(w)$,
then $d(\mathbf{v})\leq d(\mathbf{w})$ (and in
particular therefore $\mathbf{w}\in T(v)$).
%Let $\{T(v)\}$ be a compatible tree representation.
%If $(v,w)$ is an edge and $T(v) \overshadows T(w)$,
%then $\mathbf{w}\in T(v)$.
\end{observation}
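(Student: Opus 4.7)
The plan is to unpack the overshadowing hypothesis by evaluating it at the particular node $\mathbf{w}$, which is the root of $T(w)$. First I would note that since $(v,w)$ is an edge, the trees $T(v)$ and $T(w)$ share at least one node, so the cutoff value $\Theta$ (the maximum depth of a node in $T(v)\cap T(w)$) is well defined and is attained by some $\mathbf{x}\in T(v)\cap T(w)$.

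Next I would observe that because $\mathbf{w}$ is the root of $T(w)$, every node of $T(w)$ is a descendant of $\mathbf{w}$, so $d(\mathbf{w})\leq d(\mathbf{x})=\Theta$. Hence $\mathbf{w}$ is a node of $T(w)$ whose depth does not exceed the cutoff value. Applying the equivalent reformulation of $T(v)\overshadows T(w)$ stated just before the observation---namely that, restricted to nodes of depth at most $\Theta$, the tree $T(w)$ is contained in $T(v)$---I immediately conclude that $\mathbf{w}\in T(v)$.

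Finally, since $\mathbf{v}$ is by definition the root of $T(v)$ and every node of $T(v)$ is a descendant of $\mathbf{v}$, the fact that $\mathbf{w}\in T(v)$ forces $\mathbf{v}$ to be an ancestor of $\mathbf{w}$ (or equal to it), yielding $d(\mathbf{v})\leq d(\mathbf{w})$, which is the desired conclusion together with the parenthetical remark.

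There is really no hard step here: the whole argument is a one-line application of the cutoff-value reformulation to the root of the overshadowed tree. The only point worth flagging carefully is why $d(\mathbf{w})\leq \Theta$ in the first place, which relies on the convention (set up earlier in the paper) that each $T(v)$ is rooted at $\mathbf{v}$ and that $\mathbf{v}$ has the smallest depth among the nodes of $T(v)$.
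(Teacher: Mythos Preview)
Your argument is correct and is essentially the same as the paper's, just run directly rather than by contradiction: the paper assumes $d(\mathbf{v})>d(\mathbf{w})$, uses Observation~\ref{obs:intersection} to get $\mathbf{v}\in T(w)$, and then exhibits $\mathbf{w}\in T(w)\setminus T(v)$ as a witness-node violating $T(v)\overshadows T(w)$. Both proofs hinge on the same fact---that $\mathbf{w}$ has minimum depth in $T(w)$---and simply invoke the two equivalent formulations of overshadowing (your cutoff-containment version versus the paper's witness-node version).
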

\begin{proof}
If we had $d(\mathbf{v})>d(\mathbf{w})$, then $\mathbf{v}$
(which is in $T(w)$ by Observation~\ref{obs:intersection})
would be a strict descendant of $\mathbf{w}$.   But
then $\mathbf{w}\in T(w)\setminus T(v)$ has smaller depth
than $\mathbf{v}\in T(w)\cap T(v)$, which implies
$T(v)\notovershadows T(w)$.
\end{proof}

\begin{lemma}
\label{lem:bottom_up_good}
Let $\{T(v)\}$ be a tree-representation, and let $v_1,\dots,v_n$
be an enumeration of the vertices such that $T(v_j)\overshadows T(v_i)$ for all $i<j$.
%that satisfies the following:
%\begin{itemize}
%\item $d(\mathbf{v_1})\geq \dots \geq d(\mathbf{v_n})$,
%\item if $d(\mathbf{v_\ell})=d(\mathbf{v_k})$ for some $\ell>k$, then $T(v_\ell)\overshadows T(v_k)$.
%\end{itemize}
Then $v_1,\dots,v_n$ is a strong elimination order.
\end{lemma}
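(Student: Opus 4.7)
The plan is to verify the strong-elimination condition directly: for each admissible choice $i<j$ and $k<\ell$ with $v_k,v_\ell\in N[v_i]$ and $v_k\in N[v_j]$, I will exhibit a node of $T(v_j)\cap T(v_\ell)$, which immediately yields $v_\ell\in N[v_j]$.

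The workhorse will be Observation~\ref{obs:compatible} applied to the enumeration hypothesis: whenever $(v_a,v_b)$ is an edge with $a<b$, the required overshadow $T(v_b)\overshadows T(v_a)$ combined with the observation gives $\mathbf{v_a}\in T(v_b)$ and $d(\mathbf{v_b})\le d(\mathbf{v_a})$. In words, the root of the lower-indexed tree is embedded inside, and at least as deep as, the root of the higher-indexed tree along every edge. From this I will extract an auxiliary \emph{Key Fact}: whenever two vertices $v_a,v_b$ both lie in $N[v_c]$ with $c<\min(a,b)$, the node $\mathbf{v_c}$ sits in $T(v_a)\cap T(v_b)$, forcing the edge $v_a\sim v_b$.

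I will first dispose of the degenerate cases where two of the four indices coincide: $\ell=j$ is trivial, and $k=j$ or $i\in\{k,\ell\}$ reduce quickly via the Key Fact (the smallest of the indices supplies a root lying in both $T(v_j)$ and $T(v_\ell)$). In the generic case with $i,j,k,\ell$ all distinct I split on the position of the smallest index. When $k<\min(i,j)$, the node $\mathbf{v_k}$ lies in every other relevant tree by Observation~\ref{obs:compatible}, in particular in $T(v_j)\cap T(v_\ell)$. When $i<\min(k,\ell)$, the Key Fact applied to $v_k,v_\ell\in N[v_i]$ yields $v_k\sim v_\ell$, whence $\mathbf{v_k}\in T(v_\ell)$; provided $k\le j$ we also have $\mathbf{v_k}\in T(v_j)$ and conclude. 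Symmetric bookkeeping handles the situations $\ell<i$ and $k<i<\ell$.

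The main obstacle is the residual configuration $i<j<k<\ell$, where $\mathbf{v_k}$ need not lie in $T(v_j)$ and $\mathbf{v_j}$ need not automatically lie in $T(v_\ell)$. Here I plan to use the overshadow cutoff for $T(v_\ell)\overshadows T(v_k)$: since $\mathbf{v_i}\in T(v_k)\cap T(v_\ell)$, the cutoff value $\Theta_{\ell k}$ is at least $d(\mathbf{v_i})$. The node $\mathbf{v_j}\in T(v_k)$ (from $j<k$ and $v_j\sim v_k$) then lands in $T(v_\ell)$ provided $d(\mathbf{v_j})\le d(\mathbf{v_i})$, which gives $\mathbf{v_j}\in T(v_j)\cap T(v_\ell)$. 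Securing this depth comparison --- by chaining the overshadow $T(v_j)\overshadows T(v_i)$ together with the containment of $\mathbf{v_j}$ inside $T(v_k)$, rather than circularly invoking an adjacency $v_i\sim v_j$ we do not possess --- is the delicate point where the proof turns, and I anticipate it being the hardest step to execute cleanly.
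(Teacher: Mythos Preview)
Your diagnosis of the crux is exactly right: in the residual configuration $i<j<k<\ell$ everything hinges on the depth comparison $d(\mathbf{v_j})\le d(\mathbf{v_i})$, and you correctly flag that it cannot come from an adjacency between $v_i$ and $v_j$ that we do not possess. Unfortunately the step is not merely ``hard to execute cleanly''---it is impossible, because the lemma as stated is false. Take the host tree to be a path on four nodes $q,r,a,c$ (rooted at $q$, unit arc-weights, so depths $0,1,2,3$) and set $T(v_1)=\{a\}$, $T(v_2)=\{c\}$, $T(v_3)=\{r,a,c\}$, $T(v_4)=\{q,r,a\}$. All six relations $T(v_s)\overshadows T(v_r)$ for $r<s$ hold: the pairs $\{1,2\}$ and $\{2,4\}$ are disjoint, in three further pairs the smaller-indexed tree is a subset of the larger-indexed one, and for $T(v_4)\overshadows T(v_3)$ the sole node $c\in T(v_3)\setminus T(v_4)$ lies strictly below $a\in T(v_3)\cap T(v_4)$. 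Yet the intersection graph has exactly the edges $v_1v_3,\,v_1v_4,\,v_2v_3,\,v_3v_4$, so with $(i,j,k,\ell)=(1,2,3,4)$ we get $v_3,v_4\in N[v_1]$ and $v_3\in N[v_2]$ but $v_4\notin N[v_2]$: the order $v_1,v_2,v_3,v_4$ is \emph{not} a strong elimination order. Here $d(\mathbf{v_2})=3>2=d(\mathbf{v_1})$, precisely the failure of the comparison you were trying to secure.

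The paper's own proof has the same gap: just before the proof it asserts that ``such an order is always a bottom-up enumeration order by Observation~\ref{obs:overshadow}'' and then uses $d(\mathbf{v_j})\le d(\mathbf{v_i})$ freely, but Observation~\ref{obs:overshadow} requires an edge, and for non-adjacent pairs the overshadow relation is vacuous and carries no depth information. Both your argument and the paper's go through verbatim if one \emph{adds} the hypothesis that the enumeration is bottom-up, i.e.\ $d(\mathbf{v_1})\ge\cdots\ge d(\mathbf{v_n})$. (A smaller quibble: in your case $k<\min(i,j)$ you claim $\mathbf{v_k}\in T(v_\ell)$ ``by Observation~\ref{obs:compatible}'', but no edge $v_kv_\ell$ is given; this case is more cleanly absorbed via the $\{i,j\}\leftrightarrow\{k,\ell\}$ symmetry that the paper exploits to assume $i\le k$ from the outset.)
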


Before proving this, we note that such an order is always a bottom-up
enumeration order by Observation~\ref{obs:overshadow}.  But the reverse is not true;
for example in Figure~\ref{fig:bottom_up_bad} we have bottom-up enumeration order $i,j,k,\ell$,
but $T(\ell) \notovershadows T(k)$.

\begin{proof}
For ease of writing, we use $T_i$ rather than $T(v_i)$ for all $i$.
To show the condition for a strong elimination order,
fix arbitrary indices $i<j$ and $k<\ell$ with $v_i,v_j\in N[v_k]$ and $v_i\in N[v_\ell]$;
we have to show that $v_j\in N[v_\ell]$.
Note that the premise is symmetric in $\{i,j\}$ and $\{k,\ell\}$, so up to renaming
we have $i\leq k$.   By Observation~\ref{obs:intersection} and $d(\mathbf{v_i})
\geq d(\mathbf{v_k}) \geq d(\mathbf{v_\ell})$ then $\mathbf{v_i}\in T_k\cap T_\ell$;
in particular therefore $(v_k,v_\ell)$ is an edge and $\mathbf{v_k}\in T_\ell$.

If $j\geq k$ then $v_j\in N[v_k]$ implies $\mathbf{v_k}\in T_j$; since $\mathbf{v_k}\in T_\ell$
then trees $T_j$ and $T_\ell$ intersect and $v_j\in N[v_\ell]$ as desired.   So we are done
unless $j<k$, which implies $\mathbf{v_j}\in T_k$.
By assumption we have $T_\ell \overshadows T_k$, and so all nodes in $T_k\setminus T_\ell$
must have depth strictly greater than $\mathbf{v_i}\in T_k\cap T_\ell$.     Since $d(\mathbf{v_j})\leq d(\mathbf{v_i})$
and $\mathbf{v_j}\in T_k$, therefore $\mathbf{v_j}\in T_\ell$ and $v_j\in N[v_\ell]$ as desired.
\end{proof}

Observe
that $d(\mathbf{v})>d(\mathbf{w})$ immediately implies that $T(w)\notovershadows T(v)$
by Observation~\ref{obs:overshadow}, 
as does $d(\mathbf{v})=d(\mathbf{w})$ if the nodes $\mathbf{v},\mathbf{w}$
are distinct (and therefore the tree $T(v),T(w)$ are disjoint).    
Therefore we have to break ties by overshadowing only between
vertex-pairs $v,w$ with $\mathbf{v}=\mathbf{w}$.   In particular, if (as in
the tree representation obtained via Lemma~\ref{lem:ordering_treerep}) all
vertices have distinct roots of subtrees, then any bottom-up enumeration order is
a strong elimination order.

Before turning Lemma~\ref{lem:bottom_up_good}
into an algorithm to find the order, we first
show a result that was claimed in~\cite{BG-CCCG24}, but
the proof (erroneously) assumed that a strong elimination order can be
obtained by lining up simple vertices. 
An \emph{RDV graph} is a graph with a tree-representation $\{T(v)\}$ where 
every subtree has exactly one leaf.   In other words, it is the intersection
graph of downward paths in a rooted tree.    

\begin{corollary}
Let $G$ be an RDV-graph with a tree-representation $\{T(v)\}$ where every
subtree is a downward path in the host tree.    Then any bottom-up enumeration order
$v_1,\dots,v_n$ is a strong elimination order.
\end{corollary}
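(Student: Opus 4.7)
The plan is to reduce the corollary to Lemma~\ref{lem:bottom_up_good}: it suffices to show that for any bottom-up enumeration order $v_1,\dots,v_n$ and any $i<j$, we have $T(v_j)\overshadows T(v_i)$. Since by definition of a bottom-up enumeration order $d(\mathbf{v_j})\leq d(\mathbf{v_i})$, this boils down to the following geometric claim about downward paths: if $T_1,T_2$ are downward paths in a rooted tree with $d(\mathbf{v_2})\leq d(\mathbf{v_1})$, then $T_2 \overshadows T_1$.

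To prove the geometric claim, I would split into the easy case where $T_1$ and $T_2$ are disjoint (where overshadowing is automatic) and the case where they share a node $\mathbf{x}$. In the second case I would argue as follows. Every node of $T_1$ is a descendant of $\mathbf{v_1}$ and every node of $T_2$ is a descendant of $\mathbf{v_2}$, so both $\mathbf{v_1}$ and $\mathbf{v_2}$ are ancestors of $\mathbf{x}$, hence lie on the root-to-$\mathbf{x}$ path. The depth inequality $d(\mathbf{v_2})\leq d(\mathbf{v_1})$ then forces $\mathbf{v_2}$ to be an ancestor of $\mathbf{v_1}$, so the path from $\mathbf{v_2}$ down to $\mathbf{v_1}$ is contained in $T_2$, and in particular $\mathbf{v_1}\in T_2$. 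From $\mathbf{v_1}$ downward, both $T_1$ and $T_2$ continue along some downward path; let $\mathbf{y}$ be the deepest node at which they still agree. Then $T_1\cap T_2$ is exactly the downward path from $\mathbf{v_1}$ to $\mathbf{y}$.

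With this explicit description of the intersection, checking the overshadowing condition is routine: take $\Theta=d(\mathbf{y})$ as cutoff value. The nodes of $T_1$ of depth $\leq\Theta$ form precisely the $\mathbf{v_1}$-to-$\mathbf{y}$ subpath, which lies in $T_2$; while any node of $T_1$ of depth $>\Theta$ is a strict descendant of $\mathbf{y}$ along $T_1$, which diverges from the $T_2$ path below $\mathbf{y}$ and hence is not in $T_2$. This verifies $T_2\overshadows T_1$ and completes the reduction.

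The main obstacle is really just articulating the ancestor structure cleanly, particularly in the degenerate situation $d(\mathbf{v_j})=d(\mathbf{v_i})$: here either $\mathbf{v_i}=\mathbf{v_j}$ (in which case the two downward paths either share a common prefix, giving the same intersection structure above, or one is a prefix of the other) or the roots are distinct nodes at equal depth, which forces $T(v_i)$ and $T(v_j)$ to be disjoint and overshadowing is automatic. Beyond that, the argument is essentially a one-line appeal to Lemma~\ref{lem:bottom_up_good} once the overshadowing statement for downward paths is in hand.
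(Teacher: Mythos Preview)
Your proposal is correct and follows essentially the same route as the paper: reduce to Lemma~\ref{lem:bottom_up_good} by showing that whenever $d(\mathbf{v_j})\leq d(\mathbf{v_i})$ the downward path $T(v_j)$ overshadows $T(v_i)$, splitting into the disjoint case and the intersecting case, and in the latter identifying the deepest common node as the cutoff. The paper invokes Observation~\ref{obs:intersect} to get $\mathbf{v_i}\in T(v_j)$ where you spell out the ancestor argument directly, and it omits your separate discussion of the equal-depth degeneracy, but the substance is identical.
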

\begin{proof}
We will show that for any two vertices $v,w$ with $d(\mathbf{v})\leq d(\mathbf{w})$
tree $T(v)$ overshadows $T(w)$; this shows that the tree-representation is compatible
and the result then follows from Lemma~\ref{lem:bottom_up_good}.
Clearly we have $T(v)\overshadows T(w)$ if the trees are disjoint.
So assume they share nodes, which means $\mathbf{w}\in T(v)$ by Observation~\ref{obs:intersect}.
Since both subtrees are downward paths,
they go downward from $\mathbf{w}$ along the same path until some node $\mathbf{z}$,
whence they diverge and the rest of the paths are disjoint.   Setting
the cut-off point to be $d(\mathbf{z})$, therefore  $T(v)\overshadows T(w)$.
\end{proof}

To turn Lemma~\ref{lem:bottom_up_good} into an algorithm (and hence
prove Theorem~\ref{thm:treerep_peo}), we only
have to describe how to quickly find a suitable order.   

\TreerepPEO*
\begin{proof} %(of Theorem~\ref{thm:treerep_peo})
Define an auxiliary directed graph $H$ with the same vertices as $G$
and the following directed edges:    For any edge $(v,w)$ of $G$,
add a directed graph $v\rightarrow w$ to $H$ if $T(v)\notovershadows T(w)$,
i.e., if we \emph{must} put $v$ earlier in the order.    By
Lemma~\ref{lem:not_cyclic}, a directed cycle in $H$ means that the
tree-representation was not compatible, and we can detect this in 
$O(m+n)$ time since $H$ has $O(m)$ edges.    So we are done if $H$
has a directed cycle.   Otherwise, find a topological order $v_1,\dots,v_n$
of $H$, which can be done in linear time.    For any $v_i,v_j$ with $i<j$, 
if there is no edge $(v_i,v_j)$ 
in $G$ then the trees $T(v_i),T(v_j)$ are disjoint and therefore $T(v_j)\overshadows T(v_i)$.
If there is an edge $(v_i,v_j)$
in $G$, then by choice of the topological order we do \emph{not} have a 
directed edge $v_j\rightarrow v_i$ in $H$.   Therefore $T(v_j)\overshadows T(v_i)$ by definition of $H$,
and $v_1,\dots,v_n$ is a strong elimination order by Lemma~\ref{lem:bottom_up_good}.
\end{proof}

\subsection{Unweighted tree representations?}

Let us end the paper with an open question.    Farber's tree representation
uses a \emph{weighted} host tree, which is rather unusual.   Are the weights
ever necessary, i.e., is there a strongly chordal graph that does not
have a compatible tree representation with uniform arc weights?    

Note that we may assume that all weights are integers by the proof of Lemma~\ref{lem:order_treerep},
and it would be easy to replace an edge 
of weight $k>1$ by a path with $k-1$ new subdivision-nodes,    see the dotted nodes
in Figure~\ref{fig:example}.  This maintains a tree
representation if we add the new nodes to trees suitably, but can we always
achieve a compatible tree representation?   

To see that this is non-trivial, consider subdivision-nodes $\mathbf{u_1}$ and $\mathbf{u_2}$
in Figure~\ref{fig:example}.
Clearly they must get added to $T(x)$ and $T(w)$, since these trees include both the
upper and the lower end of the arc that got subdivided by $\mathbf{u_1}$ and $\mathbf{u_2}$.
But should we also add them to $T(y), T(z)$ or $T(a)$?

One can observe that $\mathbf{u_2}$ \emph{must} be added to $T(y)$,
because we have $T(x)\notovershadows T(y)$ due to node $\mathbf{y}$,
so we must maintain that $T(y)\overshadows T(x)$,
and $\mathbf{u_1}\in T(x)$ has the same depth as $\mathbf{c}\in T(x)\cap T(y)$.    
So some of the subtrees that include the upper end must be extended
into the new nodes, even if they do not include the lower end.

On the other hand,
we \emph{must not} add $\mathbf{u_1}$ to $T(z)$,
because we have $T(w)\notovershadows T(z)$ to due to node $\mathbf{z}$, 
so we must maintain that $T(z)\overshadows T(w)$,
and $\mathbf{b}\in T(w)\setminus T(z)$  has the same depth as $\mathbf{u_1}\in T(w)$.
So some of the subtrees that include the upper end \emph{must not} be extended
into the new nodes.

So neither of the two obvious strategies to extend subtrees into new nodes works.
In this specific example one can find a suitable assignment, 
and there is also the option of rearranging the host tree entirely.
%---deleting the top three nodes would maintain a tree
%representation and free up many more ways in which subtrees could be kept
%compatible).    
So it may well be true that all strongly chordal graphs
have compatible tree-representations with unit arc lengths, but this remains open.

%%%%%%%%%%%%%%%%%%%%%%%%%%%%%%%%%%%%%%%%%%%%%%%%%%%%%%%%%%%%%%%%%%%%%%%%
\bibliographystyle{plain}
\bibliography{full,papers}

\begin{appendix}
\section{The original proof}

This appendix gives the original proof from Farber's thesis,
taken verbatim except where indicated with ``$\langle$ \hspace*{3mm} $\rangle$''.
As mentioned earlier, the (typewritten, then microfilmed, then
retrieved from microfilm) version that I had access to was of somewhat lower
quality.  In particular it was often difficult to
tell indices such as $1$, $i$ and $j$ apart, and $<$ and $>$
were often visible only as a diagonal and so hard to differentiate.
Where the scan was illegible I tried my best to guess from context
of what was likely meant and I apologize in advance
for inadvertently introduced errors.

\subsection{Preliminaries}

$\langle$I list here a few things that precede the theorem in the thesis,
but are needed in its proof.$\rangle$

\begin{itemize}
\item \mbox{}$\langle$p.~10$\rangle$ If $u,v\in V(G)$ we will write $u\sim v$ if
	$u$ and $v$ are equal or adjacent.

\item \mbox{}$\langle$p.~37$\rangle$ {\bf Definition:} A \emph{strong elimination ordering} of $G$ is an ordering $v_1,\dots,v_n$
	of $V(G)$ satisfying the following two conditions for each $i,j,k$ and $\ell$:
	\begin{enumerate}
	\item If $i>j>k$ and $[v_i,v_k], [v_j,v_k]\in E(G)$ then $[v_i,v_j]\in E(G)$.
	\item If $i>j>k>\ell$ and $[v_i,v_\ell],[v_j,v_\ell],[v_j,v_k]\in E(G)$ then
		$[v_j,v_\ell]\in E(G)$.
	\end{enumerate}
\item \mbox{}$\langle$p.~38$\rangle$ {\bf Lemma 2.10}: An ordering $v_1,\dots,v_n$ of $V(G)$
	is a strong perfect elimination order of $G$ if and only if for each $i,j,k$
	and $\ell$ with $i\geq j$, $k\geq \ell$ and $i\sim \ell$, $j\sim\ell$, $j\sim k$ we have $i\sim k$.
\item $\langle$The notation $A \subset B$ (for two sets $A,B$) permits $A=B$;
	the symbol $\subsetneq$ is used for a strict subset.$\rangle$
	
\item \mbox{}$\langle$p.~82$\rangle$ {\bf Definition:} The vertices $u$ and $v$ are \emph{compatible}
	in the graph $G$ if $N[u]\subset N[v]$ or $N[v]\subset N[u]$. $\langle \dots \rangle$

	{\bf Definition:} A vertex $v$ of a graph is called \emph{simple} in $G$ if
	the vertices in $N[v]$ are pairwise compatible, or, equivalently, 
	if $\{N[u]: u\sim v\}$ is linearly ordered by inclusion.
\item \mbox{}$\langle$p.~83$\rangle$ {\bf Theorem 3.2:}   $G$ is strongly chordal if
	and only if every induced subgraph of $G$ has a simple vertex.

\item \mbox{}$\langle$pp.~111-114$\rangle$ {\bf Theorem 3.10:} $\langle$\cite{Gav74}$\rangle$: 
	A graph is chordal if and only if
	it is the intersection graph of a collection of subtrees of a tree. 

	$\langle$Since later proofs use not only the result but details
	of the construction and its properties, I repeat the necessity-proof here as well.$\rangle$

	\begin{proof} Necessity: Suppose $G$ is a connected chordal graph.
	Let $v_1,\dots,v_n$ be a perfect elimination order of $G$.
	We define a graph $T_G$ as follows:
	\begin{itemize}
	\item[(a)] $V(T_G)=\{v_1,\dots,v_n\}$.
	\item[(b)] If $i>j$ then $[v_i,v_j]\in E(T_G)$ if and only if
		$i=\min\{ k: k>j \text{ and } [v_k,v_j]\in E(G)\}$.
	\end{itemize}
	{\bf Claim 1:}  $T_G$ is a tree and for every $j\neq n$, the unique path
	in $T_G$ from $v_j$ to $v_n$, $v_jv_{j_1}\dots v_{j_k}v_n$, satisfies $j<j_1<\dots <j_k$.

	{\bf Proof of Claim 1:} $\langle \dots \rangle$
%$T_G$ contains no cycles since each vertex $v_j$ is adjacent
%	to at most one vertex $v_i$ where $i>j$. In order to show that $T_G$ is a tree it
%	suffices to show that for each $j$ there is a path in $T_G$ from $v_j$ to $v_n$.
%	Given the definition of $T_G$, it is enough to show that if $j\neq n$ then
%	$\{k: k>j \text{ and }[v_k,v_j]\in E(G)\}\neq \emptyset$.\todo{check whether this makes sense, hard to read}
%	Suppose that, for some $j\neq n$, $v_j$ is adjacent only to $v_i$ with $i<j$.
%	Since $G$ is connected there is a shortest path from $v_j$ to $v_n$, say $v_{i_0}v_{i_1}\dots v_{i_\ell}$
%	where $i=i_0$ and $n=i_\ell$.   By assumption $i_1<i$.    Since $i_1<n$ also, there is some $t$
%	between 0 and $\ell$ such that $i_t<i_{t-1}$ and $i_t<i_{t+1}$.   Since $v_1,\dots,v_n$ is
%	a perfect elimination order of $G$, $[v_{i_{t-1}},v_{i_{t+1}}]\in E(G)$, contradicting
%	the fact that we chose a shortest path from $v_i$ to $v_n$.   Thus, $T_G$ is a tree
%	and, for each $j\neq n$, there is exactly one $i>j$ such that $[v_i,v_j]\in E(T_G)$.
%	Consequently, for each $j\neq n$, the unique path in $T_G$ from $v_1$ to $v_n$,
%	$v_{j}v_{j_1}\dots v_{j_k}v_n$, satisfies $j<j_1<\dots<j_k$.   This completes the
%	proof of Claim 1. 

	\medskip
	For each $i$, let $P_i$ be the unique path in $T_G$ from $v_i$ to $v_n$,
	and let $T_i$ be the subgraph of $T_G$ induced
	by $\{v_k: k\leq i \text{ and } v_k\in N_G[v_i]\}$.

	\medskip

	{\bf Claim 2:} $T_i$ is connected for each $i$.

	{\bf Proof of claim 2:} Note that if $v_i$ lies on $P_k$ then $P_i\subseteq P_k$.
	Thus it suffices to show that if $v_k\in V(T_i)$ then $v_i$ lies on $P_k$ and
	$P_k-P_i$ is a subgraph of $T_i$.   We prove this by induction on $i-k$.   (Note
	that by the definition of $T_i$, $i-k\geq 0$ for all $v_k\in V(T_i)$.)   The case
	$i=k$ is trivial.   Suppose that $v_k\in V(T_i)-\{v_i\}$.   Then $k<i$.   Let $j$
	be the vertex of $P_k$ satisfying $[v_k,v_j]\in E(T)$. $\langle\text{sic}\rangle$  Then $[v_k,v_j]\in E(G)$
	and, moreover, $j>k$ by claim 1.     Hence $j>k$, $i>k$, and $[v_k,v_j],[v_i,v_k]\in E(G)$,
	whence $v_i\sim v_j$ in $G$ since $v_1,v_2,\dots,v_n$ is a perfect elimination order of $G$.
	Also $j\leq i$, by the definition of $T_G$ and the fact that $i>k$.   Thus $v_j\in V(T_i)$
	and $i-j<i-k$.   By the inductive hypothesis, $v_i$ lies on $P_j$ and $P_j-P_i$ is a 
	subgraph of $T_i$.   Thus $v_i$ lies on $P_k$ and $P_k-P_i$ is a subgraph of $T_i$,
	since $P_j=P_k-v_k$ and $v_k\in T(v_i)$.   The validity of claim 2 follows by induction.

	\medskip

	{\bf Claim 3:} For each $i$ and $j$, $v_i\sim v_j$ in $G$ if and only if $V(T_i)\cap V(T_j)\neq \emptyset$.

	{\bf Proof of claim 3:}  Suppose $[v_i,v_j]\in E(G)$.   Then $i\neq j$.   If $i<j$ then $v_i\in V(T_i)\cap V(T_j)$,
	by the definition of $T_i$ and $T_j$.   If $j<i$, then $v_j\in V(T_i)\cap V(T_j)$.

	Suppose that $i\neq j$ and $V(T_i)\cap V(T_j)\neq \emptyset$.   Let $v_k\in V(T_i)\cap V(T_j)$.   If
	$k=i$ or $k=j$ then $[v_i,v_j]\in E(G)$ by the definition of $T_i$ and $T_j$.   Otherwise $i\geq k$, $j\geq k$,
	and $[v_i,v_k],[v_j,v_k]\in E(G)$, whence $[v_i,v_j]\in E(G)$, since $v_1,v_2,\dots,v_n$ is a perfect
	elimination ordering of $G$, proving claim 3.

	\medskip

	Thus $G$ is the intersection graph of $\{T_1,\dots,T_n\}$, proving necessity in the case that
	$G$ is connected.   If $G$ is disconnected then we construct a tree $T_{G_i}$, and a collection
	of subtrees of $T_{G_i}$, as above, for each component $G_i$ of $G$.   We then join $v_n$ to
	the maximum vertex in the perfect elimination ordering of each component not containing $v_n$.
	The resulting tree will be denoted $T_G$.   It follows that $G$ is the intersection graph
	of the subtrees of $T_G$ chosen for each component.

	\bigskip
	Sufficiency: $\langle\dots\rangle$
	\end{proof}
\end{itemize}

\subsection{The actual proof}

$\langle$pp.~114-116$\rangle$
Several definitions are needed in order to give an intersection
graph characterization of the class of strongly chordal graphs.
Let $T$ be a tree with a distinguished vertex $r$ called the
root of $T$.   
Suppose that $T$ is weighted, i.e., that each
edges of $T$ is assigned a positive number called the length
of the edge.

For two nodes $u,v\in V(T)$ the \emph{weighted distance}
from $u$ to $v$, denoted $d_T(u,v)$, is the sum of the lengths
of the edges on the unique path from $u$ to $v$ in $T$.    If
$T_1$ and $T_2$ are two subtrees of $T$, then $T_1$ is \emph{full}
with respect to $T_2$, written $T_1\full T_2$, 
if for any two vertices $u,v$ of $T_2$ such that $d_T(r,u)\leq d_T(r,v)$, 
$v\in V(T_1)$ implies that $u\in V(T_1)$.

A collection $\{T_1,\dots,T_n\}$ of subtrees of $T$ is \emph{compatible}
if for each $i$ and $j$ either $T_i\full T_j$ or $T_j\full T_i$.

\begin{theorem}
A graph is strongly chordal if and only if it is the intersection
graph of a compatible collection of subtrees of a rooted weighted tree.
\end{theorem}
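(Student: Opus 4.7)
The plan is to prove the biconditional by splitting into the two directions and then reducing each to a form already present in Farber's characterizations: namely, that strongly chordal graphs are exactly those with a strong elimination ordering, and equivalently those whose every induced subgraph contains a simple vertex (Theorem~3.2 in the thesis).

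For the forward direction (strongly chordal implies compatible tree-representation), I would start from a strong elimination ordering $v_1,\dots,v_n$ and construct the host tree bottom-up as in Lemma~\ref{lem:order_treerep}. Concretely, make $\mathbf{v_n}$ the root, and for each $j<n$ attach $\mathbf{v_j}$ as a child of $\mathbf{v_k}$, where $v_k$ is the first strict successor of $v_j$, with arc weight $k-j$, so that $d(\mathbf{v_j})=n-j$. Then let $T_k$ be the subgraph induced by $\{\mathbf{v_i}: i\leq k, v_i \in N[v_k]\}$. Three things need checking: that each $T_k$ is connected (this follows by showing that the parent of any predecessor is also a predecessor, using the strong elimination condition applied to smaller depths), that edges of $G$ correspond exactly to intersections of subtrees (the nontrivial direction uses the strong elimination condition with $j=k$), and that the representation is compatible. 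For compatibility, I would strengthen the claim and show that $T_\ell \overshadows T_k$ whenever $\ell>k$, using as cutoff value $n-i$ where $\mathbf{v_i}$ is the deepest node in $T_k\cap T_\ell$, and then deriving a contradiction from any hypothetical $\mathbf{v_j}\in T_k\setminus T_\ell$ with $d(\mathbf{v_j})\le n-i$ by invoking the strong elimination condition on the indices $i<j$, $k<\ell$.

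For the reverse direction (compatible tree-representation implies strongly chordal), I would argue via simple vertices. Since the class of graphs admitting a compatible tree-representation is closed under induced subgraphs (just restrict each $T(v)$ to the relevant indices, which preserves both subtree-hood and the overshadowing relationship), by Farber's Theorem~3.2 it suffices to produce a simple vertex in any such graph. For this, take a vertex $v$ whose subtree root $\mathbf{v}$ has maximum depth among all subtree roots, and show that any two $u_1,u_2\in N[v]$ are compatible: using Observation~\ref{obs:intersection} together with the depth-maximality of $\mathbf{v}$, one sees that $\mathbf{v}\in T(u_1)\cap T(u_2)$, so the cutoff value for whichever of $T(u_1)\overshadows T(u_2)$ or $T(u_2)\overshadows T(u_1)$ holds is at least $d(\mathbf{v})$. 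If, say, $T(u_2)\overshadows T(u_1)$, then any supposed $x\in N[u_1]\setminus N[u_2]$ would give a node $\mathbf{y}\in T(u_1)\setminus T(u_2)$ with $\mathbf{y}\in\{\mathbf{x},\mathbf{u_1}\}$, forcing $d(\mathbf{y})>d(\mathbf{v})$ by overshadowing and contradicting the choice of $v$. Hence $N[u_1]\subseteq N[u_2]$ or vice versa, and $v$ is simple.

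The main obstacle, in my view, is the compatibility step in the forward direction: verifying $T_\ell \overshadows T_k$ requires carefully identifying the right cutoff value and then applying the strong elimination condition to quadruples of indices that only emerge after one has fixed the candidate cutoff, which is where the whole strength of the ordering is used. The other steps are relatively mechanical once the right setup is in place; in particular, the simple-vertex step in the reverse direction is short because overshadowing already encodes nesting of neighbourhoods locally at the deepest root.
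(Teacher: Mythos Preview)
Your proposal is correct and follows essentially the same route as the paper: the forward direction is exactly the construction and three-step verification of Lemma~\ref{lem:order_treerep} (including the strengthened claim $T_\ell\overshadows T_k$ for $\ell>k$ with cutoff $n-i$), and the reverse direction is exactly Lemma~\ref{lem:treerep_simple} combined with closure under induced subgraphs and Farber's Theorem~3.2. Even the identification of the compatibility step as the place where the full strength of the strong elimination condition enters matches the paper's emphasis.
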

\begin{proof} (Necessity)   Suppose $G$ is strongly chordal.
Let $v_1,\dots,v_n$ be a strong elimination order of $G$.
Construct $T_G,T_1,\dots,T_n$ as in Theorem 3.10.
Root $T_G$ at $v_n$ and assign lengths to the edges of $T_G$ so
that $d_{T_G}(v_n,v_j)=n-j$.
In view of Theorem 3.10, it suffices to show that if $i>j$ then $T_i\full T_j$.
Let $i>j$. If $V(T_i)\cap V(T_j)=\emptyset$ then $T_j\full T_i$ trivially.
Otherwise, let $\ell=\min\{s: v_s\in V(T_i)\cap V(T_j)\}$.    Suppose that
$v_k\in V(T_j)$ and $d_{T_G}(v_n,v_k)\leq d_{T_G}(v_n,v_\ell)$, i.e.,
$k\leq \ell$.    
Then, $i>j$, $k\geq \ell$, and $(v_i,v_\ell),(v_\ell,v_j),(v_j,v_k)\in E(G)$,
whence $v_i\sim v_k$ in $G$, by Lemma 2.10
and the fact that $v_1,\dots,v_n$ is a strong elimination order of $G$.
Moreover, since $v_k\in T(v_j)$, we have $k\leq j$ and hence $k\leq i$.
Thus $k\in V(T_i)$ by the choice of $T_i$ and hence $T_j\full T_i$ by
definition.

\medskip

(Sufficiency) Let $T$ be a weighted tree with root $r$, and let
$\Lambda$ be a compatible collection of subtrees of $T$.   Suppose
that $G$ is the intersection graph of $\Lambda$.    Clearly every
induced subgraph of $G$ is the intersection graph of a compatible
collection of subtrees  of $T$.    Thus, by Theorem 3.2
it suffices to show that $G$ has a simple vertex.

Let $V(G)=\{v_1,\dots,v_n\}$ and let $T_1,\dots,T_n$ be
subtrees of $T$ in $\Lambda$ satisfying $v_i\sim v_j$ in $G$ if and
only if $V(T_i)\cap V(T_j)\neq \emptyset$.    For each $i$, let
$w_i$ be the unique vertex of $T_i$ of minimum weighted distance
to $r$.    We may assume, without loss of generality, that
$d_T(w_1,r)\geq d_T(w_j,r)$ for $j=2,\dots,n$.
We claim that $v_1$ is simple in $G$.   Suppose that $v_i,v_j\in N_G(v_1)$
and $T_i\full T_j$.   It suffices to show that $N_G(v_j)\subseteq N_G(v_i)$.
Since
$$ d_T(w_1,r)\geq d_T(w_j,r)$$
and
$$ V(T_1)\cap V(T_j)\neq \emptyset$$
it follows that
$$ w_1 \in V(T_j).$$
Similarly
$$ w_1 \in V(T_i).$$
Since $T_i> T_j$ we have
$$ w_j\in V(T_i).$$
Let $v_k\in N_G(v_j)$.   Then
$$ V(T_j)\cap V(T_k)\neq \emptyset.$$
If $w_j\in V(T_k)$ then $V(T_i)\cap V(T_k)\neq \emptyset$.  Otherwise
	$$w_k\in V(T_j)$$
	$$d_T(r,w_k)\leq d_T(r,w_1)$$
and 
	$$w_1 \in V(T_i)\cap V(T_j)$$
whence
	$$w_k\in V(T_i)$$
since $T_i\full T_j$.   
In either event
	$$V(T_i)\cap V(T_k)\neq \emptyset$$
and so
	$$v_k\in N_G(v_i).$$
Since $v_k$ was an arbitrary vertex of $N_G(v_j)$, it follows that
	$$N_G(v_j) \subseteq N_G(v_i)$$.
\end{proof}

\end{appendix}

\end{document}